\numberwithin{equation}{section}
\begin{document}
\title{The boundedness of some singular integral operators on weighted Hardy spaces associated with Schr\"odinger operators}
\author{Hua Wang \footnote{E-mail address: wanghua@pku.edu.cn.}\\
\footnotesize{School of Mathematical Sciences, Peking University, Beijing 100871, China}}
\date{}
\maketitle
\begin{abstract}
Let $L=-\Delta+V$ be a Schr\"odinger operator acting on $L^2(\mathbb R^n)$, $n\ge1$, where $V\not\equiv 0$ is a nonnegative locally integrable function on $\mathbb R^n$. In this paper, we first define molecules for weighted Hardy spaces $H^p_L(w)$($0<p\le1$) associated to $L$ and establish their molecular characterizations. Then by using the atomic decomposition and molecular characterization of $H^p_L(w)$, we will show that the imaginary power $L^{i\gamma}$ is bounded on $H^p_L(w)$ for $n/{(n+1)}<p\le1$, and the fractional integral operator $L^{-\alpha/2}$ is bounded from $H^p_L(w)$ to $H^q_L(w^{q/p})$, where $0<\alpha<\min\{n/2,1\}$, $n/{(n+1)}<p\le n/{(n+\alpha)}$ and $1/q=1/p-\alpha/n$.\\
\textit{MSC:} 35J10; 42B20; 42B30\\
\textit{Keywords:} Weighted Hardy spaces; atomic decomposition; molecular characterization; imaginary powers; fractional integrals; Schr\"odinger operator
\end{abstract}

\section{Introduction}
Let $n\ge1$ and $V$ be a nonnegative locally integrable function defined on $\mathbb R^n$, not identically zero. We define the form $\mathcal Q$ by
\begin{equation*}
\mathcal Q(u,v)=\int_{\mathbb R^n}\nabla u\cdot\nabla v\,dx+\int_{\mathbb R^n}Vuv\,dx
\end{equation*}
with domain $\mathcal D(\mathcal Q)=\mathcal V\times\mathcal V$ where
\begin{equation*}
\mathcal V=\{u\in L^2(\mathbb R^n):\frac{\partial u}{\partial x_k}\in L^2(\mathbb R^n)\mbox{ for }k=1,\ldots,n \mbox{ and }\sqrt {V}u\in L^2(\mathbb R^n)\}.
\end{equation*}
It is well known that this symmetric form is closed. Note also that it was shown by Simon [15] that this form coincides with the minimal closure of the form given by the same expression but defined on $C^\infty_0(\mathbb R^n)$(the space of $C^\infty$ functions with compact supports). In other words, $C^\infty_0(\mathbb R^n)$ is a core of the form $\mathcal Q$.

Let us denote by $L$ the self-adjoint operator associated with $\mathcal Q$. The domain of $L$ is given by
\begin{equation*}
\mathcal D(L)=\{u\in \mathcal D(\mathcal Q):\exists \,v\in L^2\mbox{ such that }\mathcal Q(u,\varphi)=\int_{\mathbb R^n}v\varphi\,dx,\forall\varphi\in\mathcal D(\mathcal Q)\}.
\end{equation*}
Formally, we write $L=-\Delta+V$ as a Schr\"odinger operator with potential $V$. Let $\{e^{-tL}\}_{t>0}$ be the semigroup of linear operators generated by $-L$ and $p_t(x,y)$ be their kernels. Since $V$ is nonnegative, the Feynman-Kac formula implies that
\begin{equation}
0\le p_t(x,y)\le\frac{1}{(4\pi t)^{n/2}}e^{-\frac{|x-y|^2}{4t}}
\end{equation}
for all $t>0$ and $x,y\in\mathbb R^n$.

Since the Schr\"odinger operator $L$ is a self-adjoint positive definite operator acting on $L^2(\mathbb R^n)$, then $L$ admits the following spectral resolution
\begin{equation*}
L=\int_0^\infty \lambda\,dE_L(\lambda),
\end{equation*}
where the $E_L(\lambda)$ are spectral projectors. For any $\gamma\in\mathbb R$, we shall define the imaginary power $L^{i\gamma}$ associated to $L$ by the formula
\begin{equation*}
L^{i\gamma}=\int_0^\infty\lambda^{i\gamma}\,dE_L(\lambda).
\end{equation*}
By the functional calculus for $L$, we can also define the operator $L^{i\gamma}$ as follows
\begin{equation}
L^{i\gamma}(f)(x)=\frac{1}{\Gamma(-i\gamma)}\int_0^\infty t^{-i\gamma-1}e^{-tL}(f)(x)\,dt.
\end{equation}

By spectral theory $\|L^{i\gamma}\|_{L^2\to L^2}=1$ for all $\gamma\in\mathbb R$. Moreover, it was proved by Shen [12] that $L^{i\gamma}$ is a Calder\'on-Zygmund operator provided that $V\in RH_{n/2}$(Reverse H\"older class). We refer the readers to [6,7,14] for related results concerning the imaginary powers of self-adjoint operators.

For any $0<\alpha<n$, the fractional integrals $L^{-\alpha/2}$ associated to $L$ is defined by
\begin{equation}
L^{-\alpha/2}(f)(x)=\frac{1}{\Gamma(\alpha/2)}\int_0^\infty t^{\alpha/2-1}e^{-tL}(f)(x)\,dt.
\end{equation}

Since the kernel $p_t(x,y)$ of $\{e^{-tL}\}_{t>0}$ satisfies the Gaussian upper bound (1.1), then it is easy to check that $\big|L^{-\alpha/2}(f)(x)\big|\le C I_\alpha(|f|)(x)$ for all $x\in\mathbb R^n$, where $I_\alpha$ denotes the classical fractional integral operator(see [17])
\begin{equation*}
I_\alpha(f)(x)=\frac{\Gamma(\frac{n-\alpha}{2})}{2^\alpha\pi^{\frac n2}\Gamma(\frac{\alpha}{2})}\int_{\mathbb R^n}\frac{f(y)}{|x-y|^{n-\alpha}}dy.
\end{equation*}
Hence, by using the $L^p$-$L^q$ boundedness of $I_\alpha$(see [17]), we have
\begin{equation*}
\|L^{-\alpha/2}(f)\|_{L^q}\le C\|I_\alpha(f)\|_{L^q}\le C\|f\|_{L^p},
\end{equation*}
where $1<p<n/\alpha$ and $1/q=1/p-\alpha/n$. For more information about the fractional integrals $L^{-\alpha/2}$ associated to a general class of operators, we refer the readers to [3,9,20].

In [16], Song and Yan introduced the weighted Hardy spaces $H^1_L(w)$ associated to $L$ in terms of the area integral function and established their atomic decomposition theory. They also showed that the Riesz transform $\nabla L^{-1/2}$ is bounded on $L^p(w)$ for $1<p<2$, and bounded from $H^1_L(w)$ to the classical weighted Hardy space $H^1(w)$(see [4,18]).

Recently, in [19], we defined the weighted Hardy spaces $H^p_L(w)$ associated to $L$ for $0<p<1$ and gave their atomic decompositions. We also obtained that $\nabla L^{-1/2}$ is bounded from $H^p_L(w)$ to the classical weighted Hardy space $H^p(w)$(see also [4,18]) for $n/{(n+1)}<p<1$. In this article, we first define weighted molecules for the weighted Hardy spaces $H^p_L(w)$ associated to $L$ and then establish their molecular characterizations. As an application of the molecular characterization combining with the atomic decomposition of $H^p_L(w)$, we shall obtain some estimates of $L^{i\gamma}$ and $L^{-\alpha/2}$ on $H^p_L(w)$ for $n/{(n+1)}<p\le1$. Our main results are stated as follows.

\newtheorem{theorem}{Theorem}[section]
\begin{theorem}
Let $L=-\Delta+V$, $n/{(n+1)}<p\le1$ and $w\in A_1\cap RH_{(2/p)'}$. Then for any $\gamma\in\mathbb R$, the imaginary power $L^{i\gamma}$ is bounded from $H^p_L(w)$ to the weighted Lebesgue space $L^p(w)$.
\end{theorem}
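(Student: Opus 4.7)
The strategy is the classical one for operators on Hardy spaces: reduce to atoms via the atomic decomposition of $H^p_L(w)$ from [19], split the integral into regions close to and far from each atom's support, and use $L^2$-boundedness together with kernel estimates. Given $f\in H^p_L(w)$, write $f=\sum_j\lambda_j a_j$ with $\sum_j|\lambda_j|^p\lesssim\|f\|_{H^p_L(w)}^p$, each $a_j$ an atom adapted to $L$ (of the form $L^M b_j$ with $b_j$ supported in a ball $B_j=B(x_j,r_j)$ and satisfying the standard size conditions). Since $p\le 1$, the elementary subadditivity $|\sum z_j|^p\le\sum|z_j|^p$ reduces the theorem to the uniform bound $\|L^{i\gamma}a\|_{L^p(w)}\le C$ for a single atom $a$ associated to $B=B(x_0,r)$.

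For such an atom, split $\mathbb R^n=2B\cup(2B)^c$ and write $\int_{\mathbb R^n}|L^{i\gamma}a|^p w\,dx = I_1+I_2$ accordingly. For the local piece $I_1$, Hölder's inequality with exponents $2/p$ and $(2/p)'$ gives
\[
I_1\le\|L^{i\gamma}a\|_{L^2(2B)}^p\cdot\Big(\int_{2B}w^{(2/p)'}\,dx\Big)^{1/(2/p)'}.
\]
Spectral theory yields $\|L^{i\gamma}a\|_{L^2}\le\|a\|_{L^2}$, and the reverse Hölder hypothesis $w\in RH_{(2/p)'}$ converts the weight integral into $\lesssim|B|^{-p/2}w(2B)$. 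Combined with the atomic size condition $\|a\|_{L^2}\lesssim|B|^{1/2}w(B)^{-1/p}$ and the doubling of $w\in A_1$, this yields $I_1\le C$ uniformly in $B$. For the far piece $I_2$, use the kernel representation $L^{i\gamma}a(x)=\int K_\gamma(x,y)a(y)\,dy$ where
\[
K_\gamma(x,y)=\frac{1}{\Gamma(-i\gamma)}\int_0^\infty t^{-i\gamma-1}p_t(x,y)\,dt,
\]
together with the cancellation built into $a=L^Mb$, to derive a pointwise decay $|L^{i\gamma}a(x)|\lesssim r^{\delta}|x-x_0|^{-n-\delta}\cdot(\text{atom size factor})$ valid on $(2B)^c$ for some $\delta\in(n(1/p-1),1]$. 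Decomposing $(2B)^c$ into dyadic annuli $2^{k+1}B\setminus 2^kB$ and using $w(2^{k+1}B)\le C\,2^{kn}w(B)$ from $w\in A_1$, the resulting geometric sum converges precisely because the hypothesis $p>n/(n+1)$ admits such a $\delta$.

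The main obstacle is extracting this Hölder-type regularity for the kernel of $L^{i\gamma}$, with decay exponent strictly larger than $n(1/p-1)$, using only the Gaussian upper bound (1.1) on $p_t(x,y)$; this typically requires splitting the $t$-integral at the natural scale $t\sim|x-x_0|^2$ and carefully integrating against the oscillatory factor $t^{-i\gamma-1}$, yielding the required spatial decay uniformly in $\gamma\in\mathbb R$. A cleaner alternative, consistent with this paper's emphasis on molecular characterization, is to verify that $L^{i\gamma}a$ is (up to a bounded constant) a molecule for $H^p_L(w)$: the molecular characterization then yields $\|L^{i\gamma}a\|_{H^p_L(w)}\le C$, and the desired $\|L^{i\gamma}a\|_{L^p(w)}\le C$ follows from the continuous embedding $H^p_L(w)\hookrightarrow L^p(w)$.
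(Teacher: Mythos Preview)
Your overall strategy and your treatment of the local piece $I_1$ match the paper's proof exactly. The genuine gap is in the far piece $I_2$: you do not carry out the pointwise decay estimate, and the hints you give point in the wrong direction.

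Specifically, splitting the $t$-integral at the ``natural'' scale $t\sim|x-x_0|^2$ does not suffice here. With that split, the small-$t$ contribution on the annulus $2^{k+1}B\setminus 2^kB$ yields only $|L^{i\gamma}a(x)|\lesssim (2^kr_B)^{-n}\|a\|_{L^1}\sim 2^{-kn}w(B)^{-1/p}$; summing over annuli against $w(2^{k+1}B)\le C\,2^{kn}w(B)$ gives a divergent series for every $p\le 1$. Invoking the oscillatory factor $t^{-i\gamma}$ does not help with \emph{spatial} decay, and kernel H\"older regularity of $K_\gamma$ is unavailable under the sole hypothesis of Gaussian upper bounds. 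The paper's key observation is to split instead at $t=r_B^2$, so that $\int_0^{r_B^2}t^{-1/2}\,dt=2r_B$ keeps the full $(2^kr_B)^{-(n+1)}$ Gaussian decay intact, while on $t\ge r_B^2$ the atomic structure $a=L^Mb$ is used via $e^{-tL}a=t^{-M}(tL)^Me^{-tL}b$ together with Lemma~3.1 to produce integrable $t^{-M-1/2}$ decay. This yields the pointwise bound $|L^{i\gamma}a(x)|\le C\,2^{-k(n+1)}w(B)^{-1/p}$ on each annulus, and the resulting series $\sum_k 2^{-k[p(n+1)-n]}$ converges precisely when $p>n/(n+1)$.

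Your molecular alternative is logically sound but somewhat circular in this context: verifying that $L^{i\gamma}a$ is a $w$-$(p,2,M,\varepsilon)$-molecule is exactly the content of the paper's proof of Theorem~1.2, and the $L^2$-annulus estimates required there rest on the same pointwise bounds (with the same split at $t=r_B^2$) that you left unproved for $I_2$. The embedding $H^p_L(w)\hookrightarrow L^p(w)$ you invoke is not stated in the paper; it does hold (each atom lies in $L^p(w)$ with uniformly bounded norm, by the same H\"older/$RH_{(2/p)'}$ argument you used for $I_1$), but you would need to supply that argument as well.
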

\begin{theorem}
Let $L=-\Delta+V$, $n/{(n+1)}<p\le1$ and $w\in A_1\cap RH_{(2/p)'}$. Then for any $\gamma\in\mathbb R$, the imaginary power $L^{i\gamma}$ is bounded on $H^p_L(w)$.
\end{theorem}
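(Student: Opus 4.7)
\bigskip
\noindent\textbf{Proof proposal for Theorem 2.}

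The plan is to upgrade Theorem~1 from $H^p_L(w)\to L^p(w)$ to $H^p_L(w)\to H^p_L(w)$ by passing through the molecular characterization established earlier in the paper. Concretely, given $f\in H^p_L(w)$, I would decompose $f=\sum_j\lambda_j a_j$ into weighted $(p,q,M)$-atoms adapted to $L$, where each atom has the form $a_j=L^M b_j$ with $b_j$ supported in a ball $B_j$ and satisfying the usual size estimates. Since $L^{i\gamma}$ is defined through the functional calculus for $L$ and therefore commutes with the semigroup (and with powers of $L$), I can write
\begin{equation*}
L^{i\gamma}a_j \;=\; L^{i\gamma}L^M b_j \;=\; L^M\bigl(L^{i\gamma}b_j\bigr).
\end{equation*}
The goal then becomes to show that, up to a multiplicative constant independent of $j$, the function $L^{i\gamma}a_j$ is a weighted molecule for $H^p_L(w)$ in the sense of the molecular characterization proved earlier in the paper. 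If this can be done, summability of $\sum_j|\lambda_j|^p$ together with the molecular characterization immediately yields $\|L^{i\gamma}f\|_{H^p_L(w)}\le C\|f\|_{H^p_L(w)}$.

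The heart of the argument is therefore the verification of the two defining molecular conditions for $L^{i\gamma}a_j$: a global $L^q(w)$ size bound and an appropriate decay estimate on the dyadic annuli $2^{k+1}B_j\setminus 2^kB_j$ for $k\ge1$. For the size bound, I would use the $L^2(\mathbb R^n)$-boundedness of $L^{i\gamma}$ (which is immediate from spectral theory, $\|L^{i\gamma}\|_{L^2\to L^2}=1$), interpolated with the $L^p(w)$ bound of Theorem~1 and the weighted Calder\'on--Zygmund machinery available under the assumption $w\in A_1\cap RH_{(2/p)'}$. For the annular decay, I would split
\begin{equation*}
L^{i\gamma}b_j(x)=\frac{1}{\Gamma(-i\gamma)}\int_0^{r_j^2}t^{-i\gamma-1}e^{-tL}b_j(x)\,dt+\frac{1}{\Gamma(-i\gamma)}\int_{r_j^2}^\infty t^{-i\gamma-1}e^{-tL}b_j(x)\,dt
\end{equation*}
at the natural scale $r_j^2$ associated to the ball $B_j$. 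On the first piece I would exploit the Gaussian upper bound (1.1) for $p_t(x,y)$ to gain exponential decay in $|x-y|^2/t$ when $x\in 2^{k+1}B_j\setminus 2^kB_j$ and $y\in B_j$, which controls the contribution away from $B_j$. On the second piece I would use the cancellation built into the atom through the factor $L^M$ (equivalently, the rapid decay as $t\to\infty$ of $L^Me^{-tL}$ applied to $b_j$, bounded by $Ct^{-M}$), choosing $M$ large enough to beat the polynomial factors coming from $w\in A_1\cap RH_{(2/p)'}$ and from the geometric growth of the annuli.

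The main obstacle I foresee is obtaining the annular decay with a rate strong enough to meet the molecular decay exponent required by the characterization, uniformly in $\gamma$ (so that the implicit constants are controlled in terms of $\gamma$ in a quantitative way, typically by $(1+|\gamma|)^{N}$ for some $N$). The oscillatory factor $t^{-i\gamma}$ does not help pointwise, so the decay must be extracted entirely from the interplay between the Gaussian bound on short times and the $L^M$-cancellation on long times; one must be careful that the time integral near $t=r_j^2$ does not produce a logarithmic loss that destroys the dyadic summability in $k$. Once the molecular bounds are established with constants of polynomial growth in $(1+|\gamma|)$, assembling the pieces via the molecular characterization closes the proof.
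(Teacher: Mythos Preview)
Your overall strategy---decompose into atoms, then show $L^{i\gamma}a$ is (a multiple of) a molecule---matches the paper. But there is a genuine gap in the annular estimate.

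You take $(p,2,M)$-atoms $a=L^M b$ and put $\tilde b=L^{i\gamma}b$, so that $L^{i\gamma}a=L^M\tilde b$. The molecular Definition~3.7 then asks for \emph{unweighted $L^2$} bounds on $(r_B^2L)^k\tilde b$ for $k=0,1,\ldots,M$ over the annuli $2^{j+1}B\setminus 2^jB$ (not an ``$L^q(w)$ size bound''; the weight enters only through the normalizing factor $w(2^jB)^{-1/p}$, so no interpolation with Theorem~1.1 is needed---the $L^2$ boundedness of $L^{i\gamma}$ handles the local part directly). For $k=0$ you must control $L^{i\gamma}b$ itself, and your long-time piece is $\int_{r_B^2}^\infty t^{-i\gamma-1}e^{-tL}b\,dt$. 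But $b$ carries \emph{no} $L$-cancellation: the factor $L^M$ sits in $a$, not in $b$, so ``the cancellation built into the atom through $L^M$'' is simply unavailable at this point. The Gaussian bound gives $|e^{-tL}b(x)|\le Ct^{1/2}(2^jr_B)^{-(n+1)}\|b\|_{L^2}|B|^{1/2}$ only when $t\lesssim(2^jr_B)^2$; for larger $t$ one has merely $t^{-n/2}$ decay, and the net annular estimate is of order $2^{-jn}$ rather than $2^{-j(n+1)}$. After passing to $w(2^jB)^{-1/p}$ via Lemma~2.2 this yields an exponent $n-n/p\le0$ (since $p\le1$), so no $\varepsilon>0$ exists and the molecular condition~(C) fails.

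The paper's remedy is to start with $(p,2,\,2M)$-atoms: writing $a=L^{2M}b$ one sets $\tilde b=L^{i\gamma}(L^Mb)$, so that $m=L^M\tilde b$ and $(r_B^2L)^k\tilde b=L^{i\gamma}\bigl[(r_B^2L)^kL^Mb\bigr]$ carries an extra $L^M$ for \emph{every} $k=0,\ldots,M$. The long-time piece then becomes $r_B^{2k}\int_{r_B^2}^\infty (tL)^{M+k}e^{-tL}(b)\,t^{-(M+k+1)}\,dt$; Lemma~3.1 gives Gaussian bounds on the kernel of $(tL)^{M+k}e^{-tL}$, and since $M\ge1$ the integral $\int_{r_B^2}^\infty t^{-(M+k+1/2)}\,dt$ converges, producing the full $2^{-j(n+1)}$ decay and $\varepsilon=(n+1)-n/p>0$. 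Finally, your concern about constants growing in $|\gamma|$ is not an issue here: $\gamma$ is fixed, and the dependence through $1/|\Gamma(-i\gamma)|$ is harmless.
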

\begin{theorem}
Suppose that $L=-\Delta+V$. Let $0<\alpha<n/2$, $n/{(n+1)}<p\le1$, $1/q=1/p-\alpha/n$ and $w\in A_1\cap RH_{(2/p)'}$. Then the fractional integral operator $L^{-\alpha/2}$ is bounded from $H^p_L(w)$ to $L^q(w^{q/p})$.
\end{theorem}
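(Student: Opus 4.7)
The strategy is to combine the atomic decomposition of $H^p_L(w)$ from [19] with standard splitting arguments, reducing the target boundedness to a uniform estimate on individual atoms. Given $f\in H^p_L(w)$, I write $f=\sum_j\lambda_j a_j$ via the decomposition in [19], where each $a_j$ is a weighted $(p,2,M)$-atom (i.e.\ $a_j=L^Mb_j$ with $b_j$ supported in a ball $B_j=B(x_j,r_j)$ and $\|(r_j^2L)^kb_j\|_{L^2(w)}\le r_j^{2M}w(B_j)^{1/2-1/p}$ for $0\le k\le M$), with $\sum_j|\lambda_j|^p\lesssim\|f\|_{H^p_L(w)}^p$. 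Since $0<p\le 1$ and $q/p>1$, the $p$-subadditivity of $|\cdot|^p$ combined with Minkowski's inequality in $L^{q/p}(w^{q/p})$ gives
\begin{equation*}
\|L^{-\alpha/2}(f)\|_{L^q(w^{q/p})}^p\le\sum_j|\lambda_j|^p\|L^{-\alpha/2}(a_j)\|_{L^q(w^{q/p})}^p,
\end{equation*}
so it suffices to prove $\|L^{-\alpha/2}(a)\|_{L^q(w^{q/p})}\le C$ uniformly for every atom $a$.

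For a fixed atom $a$ supported in $B=B(x_0,r)$, I would split $\mathbb R^n=2B\cup(2B)^c$. On the local piece $2B$, the pointwise domination $|L^{-\alpha/2}(a)|\le CI_\alpha(|a|)$ combined with the Sobolev bound $\|I_\alpha(a)\|_{L^{2^*}}\le C\|a\|_{L^2}$ (where $1/2^*=1/2-\alpha/n$) and H\"older's inequality on $2B$ with exponents $2^*/q$ and $(2^*/q)'$ reduces the matter to controlling $\int_{2B}w^{q/p}\,dx$. The hypothesis $\alpha<n/2$ ensures $q/p\le(2/p)'$, so the reverse H\"older condition $w\in RH_{(2/p)'}$ combined with the $A_1$ condition (also used to convert $\|a\|_{L^2}$ into $\|a\|_{L^2(w)}$ up to a factor of $(|B|/w(B))^{1/2}$) yields the required uniform bound on the local piece.

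The global piece is the main obstacle. The naive estimate $|L^{-\alpha/2}(a)(x)|\lesssim\|a\|_{L^1}|x-x_0|^{\alpha-n}$ produces a dyadic series $\sum_k 2^{k(n-q(n-\alpha))}$ which diverges for $p\le 1$, so one must exploit the cancellation built into the atom structure. Writing $a=L^Mb$ and using the subordination formula
\begin{equation*}
L^{-\alpha/2}(a)(x)=\frac{1}{\Gamma(\alpha/2-M)}\int_0^\infty t^{\alpha/2-M-1}e^{-tL}(b)(x)\,dt,\qquad M>\alpha/2,
\end{equation*}
together with the Gaussian heat kernel bound (1.1), one derives substantially improved off-diagonal decay (heuristically of the form $|L^{-\alpha/2}(a)(x)|\lesssim r^{2M-\alpha}|x-x_0|^{-(n+2M-\alpha)}$ for $x\in(2B)^c$) by splitting the $t$-integral at $t=r^2$ and estimating each piece separately. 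Choosing $M$ sufficiently large (depending on $n$, $p$, $\alpha$) makes the resulting geometric series convergent, while the $A_1$ and $RH_{(2/p)'}$ conditions control the weighted annular integrals $\int_{2^{k+1}B\setminus 2^kB}w^{q/p}\,dx$ by a suitable multiple of $2^{kn}w(B)^{q/p}|B|^{1-q/p}$. Assembling the local and global pieces yields the uniform atom bound and completes the proof.
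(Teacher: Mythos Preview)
Your overall architecture---reduce to a uniform atom estimate via the atomic decomposition, split $\mathbb R^n=2B\cup(2B)^c$, handle $2B$ by the unweighted $L^2\!\to\!L^\mu$ Sobolev bound for $L^{-\alpha/2}$ plus H\"older and reverse H\"older, and exploit $a=L^Mb$ on $(2B)^c$---is exactly the paper's route. Two technical points need repair.

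First, the atoms in Definitions~3.2 and~3.4 carry \emph{unweighted} $L^2$ size conditions, $\|(r_B^2L)^kb\|_{L^2(B)}\le r_B^{2M}|B|^{1/2}w(B)^{-1/p}$, not the weighted $L^2(w)$ version you state. This is what makes the local step work cleanly: $\|a\|_{L^2}$ already sits at the right size, and no conversion between $L^2$ and $L^2(w)$ is needed.

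Second, your subordination formula
\[
L^{-\alpha/2}(a)(x)=\frac{1}{\Gamma(\alpha/2-M)}\int_0^\infty t^{\alpha/2-M-1}e^{-tL}(b)(x)\,dt
\]
is not valid: for $M>\alpha/2$ the spectral identity $\lambda^{M-\alpha/2}=\Gamma(\alpha/2-M)^{-1}\int_0^\infty t^{\alpha/2-M-1}e^{-t\lambda}\,dt$ fails (the integral diverges at $t=0$), and $\Gamma$ is being evaluated at a negative argument. The paper avoids this by keeping the \emph{convergent} representation $L^{-\alpha/2}(a)=c\int_0^\infty t^{\alpha/2-1}e^{-tL}(a)\,dt$, splitting at $t=r_B^2$, and for $t\ge r_B^2$ writing $e^{-tL}(a)=t^{-M}(tL)^Me^{-tL}(b)$ and invoking the Gaussian bounds on the kernel of $(tL)^Me^{-tL}$ (Lemma~3.1). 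For $t\le r_B^2$ the Gaussian bound on $p_t$ itself plus the off-diagonal distance $|x-y|\gtrsim 2^kr_B$ already suffices.

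Finally, the paper extracts only the decay $|L^{-\alpha/2}(a)(x)|\le C\,r_B^\alpha\,2^{-k(n+1)}w(B)^{-1/p}$ on the $k$-th annulus (not your sharper $2^{-k(n+2M-\alpha)}$); the resulting series $\sum_k 2^{-k[q(n+1)-n]}$ converges because of the standing hypothesis $q>p>n/(n+1)$, not because $M$ is chosen large. Your sharper decay may well be obtainable with a more careful use of the full Gaussian, but it is unnecessary here and you have not justified it.
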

\begin{theorem}
Suppose that $L=-\Delta+V$. Let $0<\alpha<\min\{n/2,1\}$, $n/{(n+1)}<p\le n/{(n+\alpha)}$, $1/q=1/p-\alpha/n$ and $w\in A_1\cap RH_{(2/p)'}$. Then the fractional integral operator $L^{-\alpha/2}$ is bounded from $H^p_L(w)$ to $H^q_L(w^{q/p})$.
\end{theorem}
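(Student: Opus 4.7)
The plan is to reduce the theorem to an atom-to-molecule statement: by the atomic decomposition of $H^p_L(w)$ from [19] together with the molecular characterization of $H^q_L(w^{q/p})$ established earlier in this paper, it suffices to prove that there is a constant $C>0$, independent of $a$, such that $L^{-\alpha/2}(a)$ is $C$ times an $H^q_L(w^{q/p})$-molecule for every $H^p_L(w)$-atom $a$. Indeed, if $f=\sum_j\lambda_j a_j$ with $\sum_j|\lambda_j|^p\lesssim\|f\|_{H^p_L(w)}^p$, then $L^{-\alpha/2}(f)=\sum_j\lambda_j L^{-\alpha/2}(a_j)$, and since $p\le q\le 1$ we have the $\ell^p\hookrightarrow\ell^q$ inequality $\sum_j|\lambda_j|^q\le(\sum_j|\lambda_j|^p)^{q/p}$, so a uniform molecular bound on each $L^{-\alpha/2}(a_j)$ produces the desired $H^q_L(w^{q/p})$ estimate.

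I would fix an atom $a$ supported in a ball $B=B(x_0,r)$ and regard $L^{-\alpha/2}(a)$ as a candidate molecule centered at $x_0$ with the same radius $r$, then verify the molecular axioms one by one. For the local $L^{q_0}(w^{q/p})$-size on $B$, I would use the pointwise domination $|L^{-\alpha/2}(a)(x)|\le C\,I_\alpha(|a|)(x)$ that follows from the Gaussian bound (1.1), together with the Muckenhoupt--Wheeden weighted $L^p(w)$-$L^q(w^{q/p})$ boundedness of the classical fractional integral $I_\alpha$, which is available because $w\in A_1\cap RH_{(2/p)'}$. For the annular decay on $2^jB\setminus 2^{j-1}B$ with $j\ge 1$, evaluating the $t$-integral in (1.3) against the Gaussian kernel yields the pointwise estimate $|L^{-\alpha/2}(a)(x)|\lesssim|x-x_0|^{\alpha-n}\|a\|_{L^1}$ whenever $x\notin 2B$; integrating this against $w^{q/p}$ over the annulus and invoking the reverse-Hölder inequality on $w$ produces the required geometric factor $2^{-j\delta}$, with $\delta>0$ exactly when $\alpha<1$.

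For the cancellation axiom, usually phrased in terms of the action of some negative power $L^{-M}$ on the molecule, I would exploit the atomic identity $a=L^Mb$ supplied by the $H^p_L(w)$-atomic structure to write $L^{-M}(L^{-\alpha/2}(a))=L^{-\alpha/2}(b)$, reducing the moment condition on the molecule to a weighted size control on $b$ that is already built into the atom. The principal technical obstacle will be the bookkeeping in the annular step: the semigroup's off-diagonal Gaussian decay, the transition from the weight $w$ to $w^{q/p}$ via reverse Hölder, and the volume growth of $2^jB$ must combine into a bound of the form $2^{-j\delta}w^{q/p}(B)^{1/q_0-1/q}|2^jB|^{-(1/q_0-1/q)}$ with $\delta$ strictly positive. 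It is precisely here that the hypotheses $1/q=1/p-\alpha/n$ and $\alpha<\min\{n/2,1\}$ interact non-trivially, and aligning all exponents correctly is where the main care must be exercised; once this is done, the molecular characterization immediately delivers the desired bound.
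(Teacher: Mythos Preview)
Your overall strategy---reduce to showing that $L^{-\alpha/2}$ sends atoms to (multiples of) molecules---matches the paper's. The execution of the annular step, however, will not close as written. The estimate you propose, $|L^{-\alpha/2}(a)(x)|\lesssim|x-x_0|^{\alpha-n}\|a\|_{L^1}$, is just the bound $|L^{-\alpha/2}(a)|\le C\,I_\alpha(|a|)$ and uses none of the structure $a=L^{2M}b$. Track the exponents: on the $j$-th annulus this yields a pointwise bound of order $2^{j(\alpha-n)}$; after taking the $L^2$ norm over $2^{j+1}B\setminus 2^jB$, comparing with the molecular target $2^{-j\varepsilon}r_B^{2M}|2^jB|^{1/2}w^{q/p}(2^jB)^{-1/q}$, and passing from $w(B)^{-1/p}$ to $w^{q/p}(2^jB)^{-1/q}$ via the $A_1$ and reverse-H\"older hypotheses, one is forced into $\varepsilon\le n-\alpha-n/q=n(1-1/p)<0$. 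No positive $\varepsilon$ is available from the naive kernel bound, and your claim that $\delta>0$ ``exactly when $\alpha<1$'' is a misreading: the constraint $\alpha<1$ in the hypotheses is there only to make the interval $n/(n+1)<p\le n/(n+\alpha)$ non-empty (equivalently $q\le 1$), not to produce positivity of the decay exponent.

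What the paper actually does is start from a $(p,2,2M)$-atom rather than $(p,2,M)$, set $\tilde b=L^{-\alpha/2}(L^Mb)$ so that $m=L^M\tilde b$, and for each $k=0,\dots,M$ estimate $(r_B^2L)^k\tilde b=r_B^{-2M}L^{-\alpha/2}\big((r_B^2L)^{M+k}b\big)$ on the annuli by splitting the time integral in (1.3) at $t=r_B^2$. For $t<r_B^2$ one uses the Gaussian bound in the form $t^{-n/2}e^{-|x-y|^2/(ct)}\le C\,t^{1/2}|x-y|^{-(n+1)}$; for $t>r_B^2$ one writes $e^{-tL}L^{M+k}b=t^{-(M+k)}(tL)^{M+k}e^{-tL}b$ and invokes Lemma~3.1 for the kernel of $(tL)^{M+k}e^{-tL}$. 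Both halves produce a factor $2^{-j(n+1)}$ instead of $2^{j(\alpha-n)}$, which after the weight conversions gives $\varepsilon=(n+1)-n/q>0$ since $q>p>n/(n+1)$. A smaller point: the local estimate on $2B$ is not obtained from Muckenhoupt--Wheeden (a $p>1$ theorem) but from the unweighted $L^2$--$L^\mu$ boundedness of $L^{-\alpha/2}$ with $1/\mu=1/2-\alpha/n$ followed by H\"older against the weight, using $w\in RH_{(2/p)'}$.
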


\section{Notations and preliminaries}
First, let us recall some standard definitions and notations. The classical $A_p$ weight
theory was first introduced by Muckenhoupt in the study of weighted
$L^p$ boundedness of Hardy-Littlewood maximal functions in [10].
A weight $w$ is a locally integrable function on $\mathbb R^n$ which takes values in $(0,\infty)$ almost everywhere, $B=B(x_0,r_B)$ denotes the ball with the center $x_0$ and radius $r_B$.
We say that $w\in A_1$, if
$$\frac1{|B|}\int_B w(x)\,dx\le C\cdot\underset{x\in B}{\mbox{ess\,inf}}\,w(x)\quad\mbox{for every ball}\;B\subseteq\mathbb R^n.$$
where $C$ is a positive constant which is independent of $B$.

A weight function $w$ is said to belong to the reverse H\"{o}lder class $RH_r$ if there exist two constants $r>1$ and $C>0$ such that the following reverse H\"{o}lder inequality holds
$$\left(\frac{1}{|B|}\int_B w(x)^r\,dx\right)^{1/r}\le C\left(\frac{1}{|B|}\int_B w(x)\,dx\right)\quad\mbox{for every ball}\; B\subseteq \mathbb R^n.$$

Given a ball $B$ and $\lambda>0$, $\lambda B$ denotes the ball with the same center as $B$ whose radius is $\lambda$ times that of $B$. For a given weight function $w$, we denote the Lebesgue measure of $B$ by $|B|$ and the weighted measure of $B$ by $w(B)$, where $w(B)=\int_Bw(x)\,dx.$

We give the following results which will be often used in the sequel.

\newtheorem{lemma}[theorem]{Lemma}
\begin{lemma}[{[5]}]
Let $w\in A_1$. Then, for any ball $B$, there exists an absolute constant $C$ such that $$w(2B)\le C\,w(B).$$ In general, for any $\lambda>1$, we have $$w(\lambda B)\le C\cdot\lambda^{n}w(B),$$where $C$ does not depend on $B$ nor on $\lambda$.
\end{lemma}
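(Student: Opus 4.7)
The plan is to derive the doubling inequality directly from the defining $A_1$ condition, via the elementary observation that the essential infimum of a weight is monotone under set inclusion. Since both claims are of the same shape, I would handle the general $\lambda>1$ statement and then obtain the $\lambda=2$ version as the special case giving an absolute constant.

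First I would fix a ball $B=B(x_0,r_B)$ and $\lambda>1$, and apply the $A_1$ hypothesis to the enlarged ball $\lambda B$, obtaining
\begin{equation*}
\frac{1}{|\lambda B|}\int_{\lambda B} w(x)\,dx \;\le\; C\cdot\underset{x\in \lambda B}{\mbox{ess\,inf}}\,w(x).
\end{equation*}
Next I would use $B\subseteq \lambda B$, which implies $\mbox{ess\,inf}_{\lambda B}\,w \le \mbox{ess\,inf}_{B}\,w$ (taking the infimum over a larger set can only decrease it), together with the trivial bound $\mbox{ess\,inf}_{B}\,w\le\frac{1}{|B|}\int_B w(x)\,dx$. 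Chaining these inequalities gives
\begin{equation*}
\frac{w(\lambda B)}{|\lambda B|} \;\le\; C\cdot\frac{w(B)}{|B|},
\end{equation*}
and since $|\lambda B|=\lambda^n |B|$ in $\mathbb R^n$, multiplying through by $|\lambda B|$ yields $w(\lambda B)\le C\lambda^n w(B)$, which is precisely the second claim. The first claim $w(2B)\le C\,w(B)$ is then the special case $\lambda=2$, with the factor $2^n$ absorbed into the constant to produce the advertised absolute constant independent of $B$.

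There is essentially no main obstacle here: the entire argument is a three-line deduction from the definition of $A_1$, the inclusion $B\subseteq\lambda B$, and the scaling of Lebesgue measure under dilation. The only point that needs a word of care is the monotonicity of the essential infimum, which holds because any representative of $w$ that realizes the essential infimum up to a null set on $\lambda B$ also gives an upper bound for the essential infimum on the smaller set $B$. No further tools (maximal functions, reverse Hölder, self-improvement of $A_1$) are required for this particular lemma.
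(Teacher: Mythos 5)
Your proof is correct and is the standard argument; the paper itself gives no proof (it cites the lemma from Garcia-Cuerva and Rubio de Francia), and your three-step chain — apply the $A_1$ condition on $\lambda B$, use monotonicity of the essential infimum under the inclusion $B\subseteq\lambda B$, and bound $\mbox{ess\,inf}_B\,w$ by the average over $B$ — is exactly the expected derivation, with the constant being the $A_1$ constant of $w$ and hence independent of both $B$ and $\lambda$.
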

\begin{lemma}[{[5]}]
Let $w\in A_1$. Then there exists a constant $C>0$ such that
$$C\cdot\frac{|E|}{|B|}\le\frac{w(E)}{w(B)}$$
for any measurable subset $E$ of a ball $B$.
\end{lemma}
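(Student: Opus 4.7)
The plan is to read off the conclusion directly from the pointwise form of the $A_1$ condition stated just before the lemma. Recall that $w\in A_1$ means
$$\frac{1}{|B|}\int_B w(y)\,dy \le C\cdot\underset{y\in B}{\mbox{ess\,inf}}\,w(y),$$
and this is equivalent to the pointwise lower bound $w(y)\ge C^{-1}\cdot w(B)/|B|$ for almost every $y\in B$, with a constant independent of the ball. This reformulation is what I would lean on, rather than trying to work with the integral definition directly.

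Given a ball $B$ and a measurable subset $E\subseteq B$, I would simply integrate this pointwise bound over $E$. Since $E\subseteq B$, the inequality $w(y)\ge C^{-1}\cdot w(B)/|B|$ holds for a.e.\ $y\in E$, so
$$w(E)=\int_E w(y)\,dy \ge \frac{1}{C}\cdot\frac{w(B)}{|B|}\cdot|E|.$$
Dividing through by $w(B)$ yields exactly
$$\frac{1}{C}\cdot\frac{|E|}{|B|}\le\frac{w(E)}{w(B)},$$
which is the claim with constant $C^{-1}$. There is no genuine obstacle here: the proof is a one-line unwinding of the definition. The only point deserving mild attention is to invoke the essential infimum formulation of the $A_1$ condition (rather than the infimum), so that the pointwise lower bound on $w$ is valid on a full-measure subset of $E$, which is all that is needed for the integration step to go through.
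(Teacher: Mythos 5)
Your argument is correct: the $A_1$ condition is precisely the statement that $w(y)\ge C^{-1}w(B)/|B|$ for a.e.\ $y\in B$, and integrating this over $E\subseteq B$ gives the claim immediately. The paper offers no proof of its own (it cites the lemma from Garcia-Cuerva and Rubio de Francia), and your one-line derivation is the standard argument found there, so there is nothing to compare or correct.
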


Given a Muckenhoupt's weight function $w$ on $\mathbb R^n$, for $0<p<\infty$, we denote by $L^p(w)$ the space of all functions satisfying
\begin{equation*}
\|f\|_{L^p(w)}=\left(\int_{\mathbb R^n}|f(x)|^pw(x)\,dx\right)^{1/p}<\infty.
\end{equation*}

Throughout this article, we will use $C$ to denote a positive constant, which is independent of the main parameters and not necessarily the same at each occurrence. By $A\sim B$, we mean that there exists a constant $C>1$ such that $\frac1C\le\frac AB\le C$. Moreover, we denote the conjugate exponent of $s>1$ by $s'=s/(s-1).$

\section{Atomic decomposition and molecular characterization of weighted Hardy spaces}
Let $L=-\Delta+V$. For any $t>0$, we define $P_t=e^{-tL}$ and
\begin{equation*}
Q_{t,k}=(-t)^k\left.\frac{d^kP_s}{ds^k}\right|_{s=t}=(tL)^k e^{-tL},\quad k=1,2,\ldots.
\end{equation*}
We denote simply by $Q_t$ when $k=1$. First note that Gaussian upper bounds carry over from heat kernels to their time derivatives.
\begin{lemma}[{[2,11]}]
For every $k=1,2,\ldots,$ there exist two positive constants $C_k$ and $c_k$ such that the kernel $p_{t,k}(x,y)$ of the operator $Q_{t,k}$ satisfies
\begin{equation*}
\big|p_{t,k}(x,y)\big|\le\frac{C_k}{(4\pi t)^{n/2}}e^{-\frac{|x-y|^2}{c_k t}}
\end{equation*}
for all $t>0$ and almost all $x,y\in\mathbb R^n$.
\end{lemma}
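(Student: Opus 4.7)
\bigskip

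\noindent\textbf{Proof plan.} The plan is to derive the bound on $p_{t,k}$ from the real-time Gaussian bound (1.1) via the analyticity of the heat semigroup and Cauchy's integral formula. The starting point is that, since $L$ is a nonnegative self-adjoint operator on $L^2(\mathbb R^n)$, the semigroup $\{e^{-tL}\}_{t>0}$ admits a bounded holomorphic extension $\{e^{-zL}\}$ to the right half-plane $\{\mathrm{Re}\,z>0\}$. In particular, for each $x,y\in\mathbb R^n$ the map $z\mapsto p_z(x,y)$ is holomorphic in $\{\mathrm{Re}\,z>0\}$, and this is what will let us control time derivatives.

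First, I would upgrade (1.1) to a complex-time Gaussian bound: there exist constants $\theta\in(0,\pi/2)$, $C>0$ and $c>0$ such that, for every $z$ with $|\arg z|\le\theta$,
\begin{equation*}
|p_z(x,y)|\le\frac{C}{|z|^{n/2}}\,e^{-c|x-y|^2/|z|},\qquad x,y\in\mathbb R^n.
\end{equation*}
This follows from (1.1) together with the contractivity of $e^{-zL}$ on $L^2$ in the right half-plane by a standard Phragm\'en--Lindel\"of / Davies--Gaffney argument, and it is precisely the form in which the estimate is recorded in the references [2,11] cited by the author.

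Next, for fixed $t>0$ and $x,y\in\mathbb R^n$, I would apply Cauchy's integral formula to the holomorphic function $z\mapsto p_z(x,y)$ on the circle $\Gamma=\{z:|z-t|=t/2\}$, which lies entirely in the sector $|\arg z|\le\theta$ (after choosing $\theta$ large enough, or by first covering $\Gamma$ by a smaller sector that still contains it, which works since $\arg z$ is uniformly bounded on $\Gamma$):
\begin{equation*}
\left.\frac{d^kp_s(x,y)}{ds^k}\right|_{s=t}=\frac{k!}{2\pi i}\oint_{\Gamma}\frac{p_z(x,y)}{(z-t)^{k+1}}\,dz.
\end{equation*}
Plugging in the complex-time Gaussian bound, and using $|z|\sim t$ and $e^{-c|x-y|^2/|z|}\le e^{-c'|x-y|^2/t}$ on $\Gamma$, a direct length-times-sup estimate gives
\begin{equation*}
\left|\frac{d^kp_s(x,y)}{ds^k}\right|_{s=t}\Bigr|\le\frac{C_k}{t^k}\cdot\frac{1}{t^{n/2}}\,e^{-c'|x-y|^2/t}.
\end{equation*}
Multiplying by $t^k$ and absorbing constants into $C_k$ and $c_k$ yields exactly the claimed bound on $p_{t,k}(x,y)=(-t)^k(d^k p_s/ds^k)|_{s=t}$.

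The main technical obstacle is the complex-time Gaussian bound of the first step; once that is in place, the Cauchy-formula step and the final estimate are routine. In fact this is the reason the author simply quotes [2,11] rather than proving the lemma from scratch, and the above three-step program is precisely the argument used there.
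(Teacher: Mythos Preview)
Your sketch is correct and is exactly the standard argument from the cited references [2,11]; the paper itself does not supply a proof of this lemma but simply quotes it, as you already note. There is nothing to add.
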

Set
\begin{equation*}
H^2(\mathbb R^n)=\overline{\mathcal R(L)}=\overline{\{Lu\in L^2(\mathbb R^n):u\in L^2(\mathbb R^n)\}},
\end{equation*}
where $\overline{\mathcal R(L)}$ stands for the range of $L$. We also set
\begin{equation*}
\Gamma(x)=\{(y,t)\in{\mathbb R}^{n+1}_+:|x-y|<t\}.
\end{equation*}For a given function $f\in L^2(\mathbb R^n)$, we consider the area integral function associated to Schr\"odinger operator $L$(see [1,8])
\begin{equation*}
S_L(f)(x)=\bigg(\iint_{\Gamma(x)}\big|Q_{t^2}(f)(y)\big|^2\frac{dydt}{t^{n+1}}\bigg)^{1/2},\quad x\in\mathbb R^n.
\end{equation*}
Given a weight function $w$ on $\mathbb R^n$, in [16,19], the authors defined the weighted Hardy spaces $H^p_L(w)$ for $0<p\le1$ as the completion of $H^2(\mathbb R^n)$ in the norm given by the $L^p(w)$-norm of area integral function; that is
\begin{equation*}
\|f\|_{H^p_L(w)}=\|S_L(f)\|_{L^p(w)}.
\end{equation*}
In [16], Song and Yan characterized weighted Hardy spaces $H^1_L(w)$ in terms of atoms in the following way and obtained their atomic characterizations.

\newtheorem{defn}[theorem]{Definition}
\begin{defn}[{[16]}]
Let $M\in\mathbb N$. A function $a(x)\in L^2(\mathbb R^n)$ is called a $(1,2,M)$-atom with respect to $w$(or a $w$-$(1,2,M)$-atom) if there exist a ball $B=B(x_0,r_B)$ and a function $b\in\mathcal D(L^M)$ such that

$(a)$ $a=L^M b$;

$(b)$ $supp\,L^k b\subseteq B, \quad k=0,1,\ldots,M$;

$(c)$ $\|(r^2_BL)^kb\|_{L^2(B)}\le r^{2M}_B|B|^{1/2}w(B)^{-1},\quad k=0,1,\ldots,M$.
\end{defn}
\begin{theorem}[{[16]}]
Let $M\in\mathbb N$ and $w\in A_1\cap RH_2$. If $f\in H^1_L(w)$, then there exist a family of $w$-$(1,2,M)$-atoms \{$a_j$\} and a sequence of real numbers \{$\lambda_j$\} with $\sum_{j}|\lambda_j|\le C\|f\|_{H^1_L(w)}$ such that $f$ can be represented in the form $f(x)=\sum_{j}\lambda_ja_j(x)$, and the sum converges both in the sense of $L^2(\mathbb R^n)$-norm and $H^1_L(w)$-norm.
\end{theorem}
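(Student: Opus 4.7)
The plan is to establish the atomic decomposition by combining a Calderón-type reproducing formula for $L$ with an atomic decomposition of a weighted tent space, following the general scheme of Coifman--Meyer--Stein adapted to our setting. Roughly, one first represents $f$ as an average in $(y,t)$ of $Q_{t^2}f$, then applies a $T^1_2(w)$ atomic decomposition to the function $F(y,t)=Q_{t^2}f(y)$, and finally collects each tent-space atom into a function of $x$ that is, up to a universal constant, one of the atoms of Definition 3.1.

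Concretely, the first step uses the functional calculus on $H^2(\mathbb{R}^n)=\overline{\mathcal R(L)}$ to obtain, for a suitable $c_M$,
\begin{equation*}
f=c_M\int_0^\infty (t^2L)^M e^{-t^2L}\bigl(Q_{t^2}f\bigr)\,\frac{dt}{t},
\end{equation*}
which follows from $\int_0^\infty s^{M+1}e^{-2s}\,\frac{ds}{s}=M!/2^{M+2}$ applied through the spectral measure. Setting $F(y,t)=Q_{t^2}f(y)$, the definition of $H^1_L(w)$ gives
\begin{equation*}
\|F\|_{T^1_2(w)}=\|S_L(f)\|_{L^1(w)}=\|f\|_{H^1_L(w)},
\end{equation*}
so $F$ lies in the weighted tent space $T^1_2(w)$. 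The second step is to invoke (or adapt, via a stopping-time argument using Lemmas 2.1 and 2.2) a decomposition $F=\sum_j\lambda_j A_j$ with $\sum_j|\lambda_j|\le C\|f\|_{H^1_L(w)}$, where each $A_j$ is supported in a tent $\widehat{B_j}$ over a ball $B_j$ of radius $r_{B_j}$ and normalized by
\begin{equation*}
\Bigl(\iint_{\widehat{B_j}}|A_j(y,t)|^2\,\frac{dy\,dt}{t}\Bigr)^{1/2}\le |B_j|^{1/2}\,w(B_j)^{-1}.
\end{equation*}
The third step is to set, for each $j$,
\begin{equation*}
b_j(x)=c_M\int_0^{r_{B_j}} t^{2M} e^{-t^2L}\bigl(A_j(\cdot,t)\bigr)(x)\,\frac{dt}{t},\qquad a_j(x)=L^M b_j(x),
\end{equation*}
so that plugging the tent decomposition into the reproducing formula yields $f=\sum_j\lambda_j a_j$. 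The remaining task is to check that (up to a universal multiplicative constant) each $a_j$ satisfies conditions (a)--(c) of Definition 3.1 and that the series converges in $L^2(\mathbb{R}^n)$ and in $H^1_L(w)$.

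The size condition (c) is the cleanest: by duality against $g\in L^2(B_j)$,
\begin{equation*}
\bigl\langle (r_{B_j}^2 L)^k b_j,g\bigr\rangle=c_M\int_0^{r_{B_j}} t^{2M}r_{B_j}^{2k}\bigl\langle A_j(\cdot,t),L^k e^{-t^2L}g\bigr\rangle\,\frac{dt}{t},
\end{equation*}
and Cauchy--Schwarz together with the quadratic estimate $\iint|(t^2L)^k e^{-t^2L}g|^2\tfrac{dy\,dt}{t}\le C\|g\|_{L^2}^2$ and the $T^1_2(w)$-normalization of $A_j$ gives the required bound; the hypothesis $w\in RH_2$ is used to pass from the resulting $L^2$-estimate back to a $w$-weighted bound on $B_j$. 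The main obstacle is the support condition (b): the operators $e^{-t^2L}$ are not strictly localized, so applying them to $A_j(\cdot,t)$ produces only exponentially localized functions rather than compactly supported ones. One handles this by first producing "approximate" atoms whose tails on annuli $2^{k+1}B_j\setminus 2^kB_j$ decay super-geometrically (via the Gaussian kernel bound of Lemma 3.1 applied to $(t^2L)^k e^{-t^2L}$ on the restriction $A_j|_{\widehat{B_j}}$), and then re-summing the annular pieces into genuine atoms supported on dilates of $B_j$, absorbing the geometric loss through the weight growth estimate $w(2^k B_j)\lesssim 2^{kn}w(B_j)$ from Lemma 2.1. Finally, $L^2$-convergence of $\sum_j\lambda_j a_j$ follows from the $L^2$-boundedness of the reconstruction map $F\mapsto\int_0^\infty t^{2M}(t^2L)^M e^{-t^2L}(F(\cdot,t))\,\tfrac{dt}{t}$ on $T^2_2$, while $H^1_L(w)$-convergence is obtained by applying the same bounds to the tails of the series.
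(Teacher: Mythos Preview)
This theorem is quoted from [16] (Song--Yan) and is not proved in the present paper, so there is no in-paper argument to compare against directly. Your overall strategy --- Calder\'on reproducing formula, weighted tent-space atomic decomposition of $F(y,t)=Q_{t^2}f(y)$, then pushing each tent atom through the reconstruction map to obtain $a_j=L^Mb_j$ --- is the standard scheme and is indeed what [16] and [19] do.

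There is, however, a genuine gap in how you propose to secure the support condition~(b). With the heat propagator $e^{-t^2L}$ in the reconstruction, the functions $L^kb_j$ are only Gaussian-localized near $B_j$, not compactly supported. Your plan to ``re-sum the annular pieces into genuine atoms'' does not work as written: truncating $b_j$ by characteristic functions of annuli destroys the identity $a=L^Mb$ (the cutoffs do not commute with $L$, and $L^M$ applied to a sharply truncated $b_j$ is not an $L^2$ function). What your construction actually produces are \emph{molecules} in the sense of Definition~3.7, and converting molecules to atoms (Theorem~3.8(ii)) itself requires a device you have not invoked. The correct fix --- and the one used in [16], [19], and visibly in this paper's proof of Theorem~3.8 --- is to replace the heat operators by operators with \emph{finite speed of propagation}: choose an even $\varphi\in C^\infty_0(\mathbb R)$, set $\Phi=\widehat\varphi$, and build the reproducing formula from $(t^2L)^M\Phi(t\sqrt L)$ in place of $(t^2L)^Me^{-t^2L}$. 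By Lemma~3.6 the kernel of $(t^2L)^j\Phi(t\sqrt L)$ is supported in $\{|x-y|\le t\}$, so if $A_j$ is supported in the tent $\widehat{B_j}$ then every $L^kb_j$ is genuinely supported in a fixed dilate of $B_j$. With this single substitution your duality argument for~(c) goes through unchanged (the relevant $\psi(z)=z^{2k}\Phi(z)$ lies in the class $\mathcal F(s)$ of estimate~(3.1)), and the remainder of your outline is correct.
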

\noindent Similarly, in [19], we introduced the notion of weighted atoms for $H^p_L(w)$($0<p<1$) and proved their atomic characterizations.

\begin{defn}[{[19]}]
Let $M\in\mathbb N$ and $0<p<1$. A function $a(x)\in L^2(\mathbb R^n)$ is called a $(p,2,M)$-atom with respect to $w$(or a $w$-$(p,2,M)$-atom) if there exist a ball $B=B(x_0,r_B)$ and a function $b\in\mathcal D(L^M)$ such that

$(a')$ $a=L^M b$;

$(b')$ $supp\,L^k b\subseteq B, \quad k=0,1,\ldots,M$;

$(c')$ $\|(r^2_BL)^kb\|_{L^2(B)}\le r^{2M}_B|B|^{1/2}w(B)^{-1/p},\quad k=0,1,\ldots,M$.
\end{defn}
\begin{theorem}[{[19]}]
Let $M\in\mathbb N$, $0<p<1$ and $w\in A_1\cap RH_{(2/p)'}$. If $f\in H^p_L(w)$, then there exist a family of $w$-$(p,2,M)$-atoms \{$a_j$\} and a sequence of real numbers \{$\lambda_j$\} with $\sum_{j}|\lambda_j|^p\le C\|f\|^p_{H^p_L(w)}$ such that $f$ can be represented in the form $f(x)=\sum_{j}\lambda_ja_j(x)$, and the sum converges both in the sense of $L^2(\mathbb R^n)$-norm and $H^p_L(w)$-norm.
\end{theorem}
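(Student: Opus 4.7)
The plan is to follow the tent-space approach of Coifman--Meyer--Stein, adapted to the operator $L$ (in the style of Song--Yan's argument for $p=1$) and transplanted to the weighted setting when $0<p<1$. The starting point is the $L^2$-valid Calder\'on reproducing formula furnished by the functional calculus: for $f\in H^2(\mathbb R^n)=\overline{\mathcal R(L)}$,
\begin{equation*}
f=c_M\int_0^\infty (t^2L)^{M+1}e^{-t^2L}f\,\frac{dt}{t}=c_M\int_0^\infty (t^2L)^M e^{-t^2L}\big(Q_{t^2}f\big)\,\frac{dt}{t},
\end{equation*}
so that $f=c_M\,\pi_L(F)$ with $F(y,t):=Q_{t^2}(f)(y)$ and $\pi_L$ the natural tent-space synthesis operator. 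By hypothesis $\|F\|_{T^{p}_2(w)}\sim \|S_L(f)\|_{L^p(w)}=\|f\|_{H^p_L(w)}<\infty$, and the whole strategy is to transfer a tent-space atomic decomposition of $F$ through $\pi_L$.

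Second, decompose $F$ via level sets. Set $\Omega_k:=\{x:S_L(f)(x)>2^k\}$, which is open with finite weighted measure, and apply a Whitney covering to obtain balls $\{B_{k,j}\}_j$ of bounded overlap with $\mathrm{dist}(B_{k,j},\Omega_k^c)\sim r_{B_{k,j}}$. Writing $T(\Omega):=\{(y,t):B(y,t)\subset\Omega\}$ for the tent, decompose
\begin{equation*}
F=\sum_{k,j}F_{k,j},\qquad F_{k,j}:=F\cdot\chi_{\widehat{B_{k,j}}\cap(T(\Omega_k)\setminus T(\Omega_{k+1}))},
\end{equation*}
set $b_{k,j}:=\pi_L(F_{k,j})$, choose coefficients $\lambda_{k,j}:=C\cdot 2^k w(B_{k,j})^{1/p}$, and define $a_{k,j}:=\lambda_{k,j}^{-1}c_M\,L^M(b_{k,j}/L^M)$, i.e.\ $a_{k,j}=\lambda_{k,j}^{-1}c_M\,b_{k,j}$ with the $M$-th antiderivative $c_M\lambda_{k,j}^{-1}\int_0^\infty t^{2M}(t^2L)^{M}e^{-t^2L}(F_{k,j}(\cdot,t))\frac{dt}{t}/L^M$ playing the role of the function $b$ in Definition 2. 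Then $f=\sum_{k,j}\lambda_{k,j}a_{k,j}$ with equality in $L^2$.

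Third, verify the atom conditions $(a')$--$(c')$, relative to a fixed dilate of $B_{k,j}$. Condition $(a')$ is built in. For $(b')$, although the semigroup has no exact finite propagation, the Gaussian kernel bounds of Lemma~1 let one replace $\widehat{B_{k,j}}$ by an annular decomposition $\bigcup_\ell(2^{\ell+1}B_{k,j}\setminus 2^\ell B_{k,j})$ and absorb the tails: each annular contribution is controlled by $e^{-c\,4^\ell}$, which is summable once $M$ is chosen large enough. For $(c')$, the key estimate comes from the $T^2\to L^2$ boundedness of $\pi_L$ combined with duality against $L^2$ test functions,
\begin{equation*}
\big\|(r_{B_{k,j}}^2 L)^k b_{k,j}\big\|_{L^2}\lesssim r_{B_{k,j}}^{2M}\|F_{k,j}\|_{T^{2}}\lesssim r_{B_{k,j}}^{2M}\cdot 2^k|B_{k,j}|^{1/2},
\end{equation*}
the last step because $F_{k,j}$ lives off $T(\Omega_{k+1})$, so pointwise $|F_{k,j}|$ inherits the size $\lesssim 2^k$ in the sense of the tent norm. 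The reverse H\"older hypothesis $w\in RH_{(2/p)'}$ is precisely what converts the Lebesgue measure factor $|B_{k,j}|^{1/2}$ into the weighted factor $|B_{k,j}|^{1/2}w(B_{k,j})^{-1/p}\cdot w(B_{k,j})^{1/p}$ needed to match $(c')$ after dividing by $\lambda_{k,j}$.

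Finally, the coefficient sum is a standard tent-space computation using $w\in A_1$ together with the layer-cake identity:
\begin{equation*}
\sum_{k,j}|\lambda_{k,j}|^p\lesssim\sum_k 2^{kp}w(\Omega_k)\lesssim\int_{\mathbb R^n}S_L(f)^p\,w\,dx=\|f\|_{H^p_L(w)}^p.
\end{equation*}
Convergence in $L^2$ follows from dominated convergence in the reproducing formula, and convergence in $H^p_L(w)$ from the fact that partial sums truncate $F$ to subregions of the tent space, so their $S_L$-norm is controlled in the same way. The main obstacle is step three: one has to track precisely how the Whitney geometry, the tent-space norm on $F_{k,j}$, and the weight class $A_1\cap RH_{(2/p)'}$ conspire to produce the sharp power $w(B_{k,j})^{-1/p}$ in the atom-size bound $(c')$, which is where the reverse H\"older assumption enters in an essential way.
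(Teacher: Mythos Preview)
The paper does not actually prove this theorem; it is quoted from the companion preprint~[19] and used as a black box. So there is no ``paper's own proof'' to match. That said, your tent-space/Whitney scheme is the right architecture and is exactly in the spirit of Song--Yan~[16] and Hofmann--Lu--Mitrea--Mitrea--Yan~[8]. One step, however, does not go through as written.

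The problem is your treatment of the support condition~$(b')$. Definition~3.4 demands \emph{exact} compact support: $\mathrm{supp}\,L^k b\subseteq B$ for $k=0,1,\ldots,M$. If you build the synthesis operator $\pi_L$ out of the heat semigroup, i.e.\ from pieces of the form $(t^2L)^Me^{-t^2L}$, then the image is never compactly supported, regardless of Gaussian decay or the size of $M$; Gaussian tails give you molecules, not atoms. Your proposed remedy---``absorb the tails by $e^{-c\,4^\ell}$ summability''---yields at best a molecular bound of the type in Definition~3.7, and you would then still owe a proof that molecules lie in $H^p_L(w)$ with uniformly bounded norm, which is a separate argument (Theorem~3.8 in this paper). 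The standard fix, already visible in the paper via Lemma~3.6, is to replace $e^{-t^2L}$ in the reproducing formula by $\Phi(t\sqrt L)$ with $\Phi=\hat\varphi$ and $\varphi\in C^\infty_0$ even: the finite speed of propagation for $\cos(t\sqrt L)$ forces $K_{(t^2L)^j\Phi(t\sqrt L)}$ to be supported in $\{|x-y|\le t\}$, so that integrating over $\widehat{B_{k,j}}$ genuinely produces functions supported in a fixed dilate of $B_{k,j}$. Once you make that replacement, your duality computation for $(c')$ and the layer-cake sum for $\sum|\lambda_{k,j}|^p$ go through exactly as you sketched, with $w\in A_1\cap RH_{(2/p)'}$ entering precisely where you indicated.

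A smaller point: your factorization $(t^2L)^{M+1}e^{-t^2L}=(t^2L)^Me^{-t^2L}\circ Q_{t^2}$ is off by an exponential factor, since $Q_{t^2}=(t^2L)e^{-t^2L}$ already carries its own $e^{-t^2L}$. This is harmless---just pick the reproducing kernel $(t^2L)^M\Psi^2(t\sqrt L)$ as in the proof of Theorem~3.8---but it is worth writing correctly.
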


For every bounded Borel function $F:[0,\infty)\to\mathbb C$, we define the operator $F(L):L^2(\mathbb R^n)\to L^2(\mathbb R^n)$ by the following formula
\begin{equation*}
F(L)=\int_0^\infty F(\lambda)\,dE_L(\lambda),
\end{equation*}
where $E_L(\lambda)$ is the spectral decomposition of $L$. Therefore, the operator $\cos(t\sqrt L)$ is well-defined on $L^2(\mathbb R^n)$. Moreover, it follows from [13] that there exists a constant $c_0$ such that the Schwartz kernel $K_{\cos(t\sqrt L)}(x,y)$ of $\cos(t\sqrt L)$ has support contained in $\{(x,y)\in \mathbb R^n\times\mathbb R^n:|x-y|\le c_0t\}$. By the functional calculus for $L$ and Fourier inversion formula, whenever $F$ is an even bounded Borel function with $\hat F\in L^1(\mathbb R)$, we can write $F(\sqrt L)$ in terms of $\cos(t\sqrt L)$; precisely
\begin{equation*}
F(\sqrt L)=(2\pi)^{-1}\int_{-\infty}^\infty \hat F(t)\cos(t\sqrt L)\,dt,
\end{equation*}
which gives
\begin{equation*}
K_{F(\sqrt L)}(x,y)=(2\pi)^{-1}\int_{|t|\ge c_0^{-1}|x-y|}\hat F(t)K_{\cos(t\sqrt L)}(x,y)\,dt.
\end{equation*}

\begin{lemma}[{[8]}]
Let $\varphi\in C^\infty_0(\mathbb R)$ be even and $supp\,\varphi\subseteq[-c_0^{-1},c_0^{-1}]$. Let $\Phi$ denote the Fourier transform of $\varphi$. Then for each $j=0,1,\ldots$, and for all $t>0$, the Schwartz kernel $K_{(t^2L)^j\Phi(t\sqrt L)}(x,y)$ of $(t^2L)^j\Phi(t\sqrt L)$ satisfies
\begin{equation*}
supp\,K_{(t^2L)^j\Phi(t\sqrt L)}\subseteq\{(x,y)\in\mathbb R^n\times\mathbb R^n:|x-y|\le t\}.
\end{equation*}
\end{lemma}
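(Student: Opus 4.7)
The plan is to realize $(t^2 L)^j \Phi(t\sqrt L)$ as $F(\sqrt L)$ for the even function $F(\lambda) = t^{2j}\lambda^{2j}\Phi(t\lambda)$, and then apply the Fourier-inversion identity displayed just before the lemma, which reduces everything to the finite propagation speed of $\cos(t\sqrt L)$. The function $F$ is even because $\varphi$ is even (so $\Phi = \widehat{\varphi}$ is even) and $\lambda^{2j}$ is even, so the identity applies provided $\widehat F \in L^1(\mathbb R)$.

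The next step is to compute $\widehat F$ and track its support. Since $\Phi = \widehat\varphi$, a change of variables shows that $\lambda\mapsto \Phi(t\lambda)$ is the Fourier transform of $u\mapsto t^{-1}\varphi(u/t)$, which is supported in $|u|\le c_0^{-1}t$. Multiplication by $\lambda^{2j}$ on the Fourier side corresponds (up to the sign $(-1)^j$) to taking the $2j$-th derivative on the spatial side, so
\begin{equation*}
\widehat F(u) = (-1)^j\, t^{-1}\,\varphi^{(2j)}(u/t),
\end{equation*}
again supported in $|u|\le c_0^{-1}t$; since $\varphi\in C^\infty_0(\mathbb R)$, this is plainly in $L^1(\mathbb R)$, and the Fourier-inversion formula is justified.

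Inserting this into the displayed formula gives
\begin{equation*}
K_{(t^2L)^j\Phi(t\sqrt L)}(x,y)=(2\pi)^{-1}\int_{|u|\ge c_0^{-1}|x-y|}\widehat F(u)\,K_{\cos(u\sqrt L)}(x,y)\,du.
\end{equation*}
For the kernel to be nonzero at $(x,y)$, the integrand must be nonzero for some $u$. The support of $\widehat F$ forces $|u|\le c_0^{-1}t$, while the support property of $K_{\cos(u\sqrt L)}$ (namely $|x-y|\le c_0|u|$) forces $|u|\ge c_0^{-1}|x-y|$. Combining, $c_0^{-1}|x-y|\le |u|\le c_0^{-1}t$, hence $|x-y|\le t$, which is the desired support bound.

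There is no serious obstacle: the argument is bookkeeping of supports under the Fourier transform, and the only point requiring a touch of care is verifying that $\widehat F\in L^1$ and that the representation $F(\sqrt L)=(2\pi)^{-1}\int \widehat F(u)\cos(u\sqrt L)\,du$ actually holds for this specific $F$ (so that one can legitimately pass to Schwartz kernels); both are guaranteed by $\varphi\in C^\infty_0(\mathbb R)$ and the spectral calculus already invoked in the text.
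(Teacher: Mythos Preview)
The paper does not supply its own proof of this lemma; it is quoted from reference~[8] (Hofmann--Lu--Mitrea--Mitrea--Yan), so there is no in-paper argument to compare against. Your proof is correct and is precisely the standard finite-propagation-speed argument that the surrounding text sets up: write $(t^2L)^j\Phi(t\sqrt L)=F(\sqrt L)$ with $F(\lambda)=t^{2j}\lambda^{2j}\Phi(t\lambda)$, observe that $\widehat F$ is (up to an irrelevant constant depending on Fourier conventions) a rescaled derivative of $\varphi$ and hence supported in $|u|\le c_0^{-1}t$, and then combine with the support of $K_{\cos(u\sqrt L)}$. One small point worth recording explicitly is that $F$ is bounded (since $\Phi$ is Schwartz), so the spectral-calculus formula displayed before the lemma is indeed applicable; you allude to this at the end, and it is the only place where any justification beyond support-tracking is needed.
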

For a given $s>0$, we set
\begin{equation*}
\mathcal F(s)=\Big\{\psi:\mathbb C\to\mathbb C \mbox{ measurable}, |\psi(z)|\le C\frac{|z|^s}{1+|z|^{2s}}\Big\}.
\end{equation*}
Then for any nonzero function $\psi\in\mathcal F(s)$, we have the following estimate(see [16])
\begin{equation}
\Big(\int_0^\infty\|\psi(t\sqrt L)f\|^2_{L^2(\mathbb R^n)}\frac{dt}{t}\Big)^{1/2}\le C\|f\|_{L^2(\mathbb R^n)}.
\end{equation}

We are now going to define the weighted molecules corresponding to the weighted atoms mentioned above.
\begin{defn}
Let $\varepsilon>0$, $M\in\mathbb N$ and $0<p\le1$. A function $m(x)\in L^2(\mathbb R^n)$ is called a $w$-$(p,2,M,\varepsilon)$-molecule associated to $L$ if there exist a ball $B=B(x_0,r_B)$ and a function $b\in\mathcal D(L^M)$ such that

$(A)$ $m=L^M b$;

$(B)$ $\|(r^2_BL)^kb\|_{L^2(2B)}\le r^{2M}_B|B|^{1/2}w(B)^{-1/p}, \quad k=0,1,\ldots,M$;

$(C)$ $\|(r^2_BL)^kb\|_{L^2(2^{j+1}B\backslash 2^j B)}\le 2^{-j\varepsilon}r^{2M}_B|2^jB|^{1/2}w(2^jB)^{-1/p}$,

$\qquad k=0,1,\ldots,M,\quad j=1,2,\ldots$.
\end{defn}

Note that for every $w$-$(p,2,M)$-atom $a$, it is a $w$-$(p,2,M,\varepsilon)$-molecule for all $\varepsilon>0$. Then we are able to establish the following molecular characterization for the weighted Hardy spaces $H^p_L(w)$($0<p\le1$) associated to $L$.

\begin{theorem}
Let $\varepsilon>0$, $M\in\mathbb N$, $0<p\le1$ and $w\in A_1\cap RH_{(2/p)'}$ .

\noindent$(i)$ If $f\in H^p_L(w)$, then there exist a family of $w$-$(p,2,M,\varepsilon)$-molecules \{$m_j$\} and a sequence of real numbers \{$\lambda_j$\} with $\sum_j|\lambda_j|^p\le C\|f\|^p_{H^p_L(w)}$ such that $f(x)=\sum_j\lambda_j m_j(x)$, and the sum converges both in the sense of $L^2(\mathbb R^n)$-norm and $H^p_L(w)$-norm.

\noindent$(ii)$ Assume that $M>\frac n2(\frac1p-\frac12)$. Then every $w$-$(p,2,M,\varepsilon)$-molecule $m$ is in $H^p_L(w)$. Moreover, there exists a constant $C>0$ independent of $m$ such that $\|m\|_{H^p_L(w)}\le C$.
\end{theorem}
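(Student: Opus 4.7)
Part (i) follows immediately from Theorems 1 and 2: every $w$-$(p,2,M)$-atom $a$ is a $w$-$(p,2,M,\varepsilon)$-molecule for every $\varepsilon>0$, because the support condition $supp\,L^k b\subseteq B$ in the atom definition makes condition (C) vacuously true and reduces (B) to (c'). The atomic decomposition then yields the required molecular decomposition with the same $\ell^p$-control of the coefficients.

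For part (ii), since $\|f\|_{H^p_L(w)}=\|S_L(f)\|_{L^p(w)}$, it suffices to prove $\|S_L(m)\|_{L^p(w)}\le C$ uniformly in $m$. The plan is to split the integration into a local piece and dyadic annuli
\[
\|S_L(m)\|_{L^p(w)}^p=\int_{4B}S_L(m)^p\,w\,dx+\sum_{j\ge 1}\int_{E_j}S_L(m)^p\,w\,dx,\qquad E_j:=2^{j+2}B\setminus 2^{j+1}B,
\]
and on each region $U\subseteq U^*$ apply H\"older's inequality of exponent $2/p>1$ together with the reverse H\"older condition $w\in RH_{(2/p)'}$ to reduce matters to unweighted $L^2$-estimates:
\[
\int_U S_L(m)^p\,w\,dx\le C\,\|S_L(m)\|_{L^2(U)}^p\,w(U^*)\,|U^*|^{-p/2}.
\]
On $4B$, I would invoke the $L^2$-boundedness of $S_L$ --- which follows from (3.1) applied to $\psi(z)=z^2 e^{-z^2}\in\mathcal F(2)$ --- together with the global $L^2$-bound on $m$ obtained by summing (B) and (C) for $k=M$. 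On each far annulus $E_j$, I would exploit the identity $Q_{t^2}L^M=t^{-2M}(t^2L)^{M+1}e^{-t^2L}$ combined with the Gaussian kernel estimate of Lemma 3.1: splitting $b=b\chi_{2^{j-1}B}+b\chi_{(2^{j-1}B)^c}$, the outer piece is controlled by $L^2$-boundedness of $S_L$ and condition (C), which supplies the decay $2^{-j\varepsilon}$; for the inner piece, the separation $|x-z|\gtrsim 2^j r_B$ between $x\in E_j$ and $z\in supp\,b\chi_{2^{j-1}B}$ permits splitting the $t$-integral at $t=2^j r_B$, gaining arbitrary Gaussian decay in $j$ from the small-$t$ portion and a factor $(2^j r_B)^{-4M}$ from the large-$t$ portion.

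The main obstacle will be the bookkeeping for the large-$t$ contribution on the far annuli. After combining with the weight estimate $w(E_j^*)|E_j^*|^{-p/2}\lesssim 2^{jn(1-p/2)}w(B)|B|^{-p/2}$ (from $A_1$-doubling of $w$, Lemma 2.1), the net growth in $j$ of the critical large-$t$ term is $2^{j[n(1-p/2)-2Mp]}$, which is summable precisely when $M>\frac{n}{2}\big(\frac{1}{p}-\frac{1}{2}\big)$ --- exactly the hypothesis. The Gaussian factor and the $2^{-j\varepsilon}$ decay absorb the remaining terms, and summing in $j$ yields the desired bound $\|S_L(m)\|_{L^p(w)}^p\le C$, completing the proof.
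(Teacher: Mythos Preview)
Your treatment of (i) matches the paper's one-line argument.

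For (ii), your strategy --- direct estimation of $\|S_L(m)\|_{L^p(w)}$ via a local/annular decomposition in $x$ --- is genuinely different from the paper's. The paper does \emph{not} estimate $S_L(m)$ directly; instead it invokes the Calder\'on reproducing formula $m=c_\psi\int_0^\infty (t^2L)^M\Psi^2(t\sqrt L)(m)\,\frac{dt}{t}$ with $\Psi$ built so that, by Lemma~3.6 (finite speed of propagation), the kernels of $(t^2L)^j\Phi(t\sqrt L)$ are supported in $\{|x-y|\le t\}$. Partitioning $\mathbb R^n\times(0,\infty)$ into the regions $U_j(B)\times(0,2^jr_B]$ and $2^jB\times(2^{j-1}r_B,2^jr_B]$ then yields pieces that are \emph{exactly} supported in dilated balls, and each piece is checked to be a constant multiple of a $w$-$(p,2,M)$-atom; the $\ell^p$-summability of the coefficients comes from the factor $2^{-j\varepsilon}$ and from $2^{-j[2M-n(1/p-1/2)]}$ (this is precisely where $M>\frac n2(\frac1p-\frac12)$ enters). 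The advantage of this route is that support conditions are sharp rather than Gaussian, so no off-diagonal summation is needed.

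Your approach can be carried through, but the sketch has a real gap in the far-annulus ``outer piece''. After writing $Q_{t^2}m=t^{-2M}(t^2L)^{M+1}e^{-t^2L}b$ and splitting $b=b\chi_{2^{j-1}B}+b\chi_{(2^{j-1}B)^c}$, the outer contribution to the area integral carries the singular factor $t^{-4M}$ near $t=0$; ``$L^2$-boundedness of $S_L$'' cannot absorb this, and condition (C) controls $(L^kb)\chi_{U_l(B)}$, not $L^k(b\chi_{U_l(B)})$, so it gives no direct handle on $L^M(b\chi_{(2^{j-1}B)^c})$. The fix is to split the $t$-integral \emph{before} splitting the function: for $t\lesssim 2^j r_B$ work with $m$ itself (decompose $m$ into annular pieces, using (C) with $k=M$ for the nearby annuli and the Gaussian off-diagonal decay of $Q_{t^2}$ for the distant ones), and only for $t\gtrsim 2^j r_B$ invoke the identity involving $b$, where the $t^{-4M}$ factor is harmless and yields the $(2^jr_B)^{-4M}$ gain you describe. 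With that repair the bookkeeping can be made to go through, though it is appreciably more intricate than the paper's atom-by-atom route.
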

\begin{proof}
$(i)$ is a straightforward consequence of Theorems 3.3 and 3.5.

$(ii)$ We follow the same constructions as in [8]. Suppose that $m$ is a $w$-$(p,2,M,\varepsilon)$-molecule associated to a ball $B=B(x_0,r_B)$. Let $\varphi\in C^\infty_0(\mathbb R)$ be even with $supp\,\varphi\subseteq[-(2c_0)^{-1},(2c_0)^{-1}]$ and let $\Phi$ denote the Fourier transform of $\varphi$. We set $\Psi(x)=x^2\Phi(x), x\in\mathbb R$.
By the $L^2$-functional calculus of $L$, for every $m\in L^2(\mathbb R^n)$, we can establish the following version of the Calder\'on reproducing formula
\begin{equation}
m(x)=c_\psi\int_0^\infty (t^2L)^M\Psi^2(t\sqrt L)(m)(x)\frac{dt}{t},
\end{equation}
where the above equality holds in the sense of $L^2(\mathbb R^n)$-norm. Set $U_0(B)=2B$, $U_j(B)=2^{j+1}B\backslash2^jB,j=1,2,\ldots$, then we can decompose
\begin{equation*}
\mathbb R^n\times (0,\infty)=\Big(\bigcup_{j=0}^\infty U_j(B)\times(0,2^jr_B]\Big)\bigcup\Big(\bigcup_{j=1}^\infty 2^jB\times(2^{j-1}r_B,2^jr_B]\Big).
\end{equation*}
Hence, by the formula (3.2), we are able to write
\begin{equation*}
\begin{split}
m(x)=&c_\psi\sum_{j=0}^\infty\int_0^{2^jr_B}(t^2L)^M\Psi^2(t\sqrt L)(m\chi_{U_j(B)})(x)\frac{dt}{t}\\
&+c_\psi\sum_{j=1}^\infty\int_{2^{j-1}r_B}^{2^jr_B}(t^2L)^M\Psi^2(t\sqrt L)(m\chi_{2^{j}B})(x)\frac{dt}{t}\\
=&\sum_{j=0}^\infty m^{(1)}_j(x)+\sum_{j=1}^\infty m^{(2)}_j(x).
\end{split}
\end{equation*}

Let us first estimate the terms $\big\{m^{(1)}_j\big\}^\infty_{j=0}$. We will show that each $m^{(1)}_j$ is a multiple of $w$-$(p,2,M)$-atom with a sequence of coefficients in $l^p$. Indeed, for every $j=0,1,2,\ldots$, one can write
\begin{equation*}
m^{(1)}_j(x)=L^M b_j(x),
\end{equation*}
where
\begin{equation*}
b_j(x)=c_\psi\int_0^{2^jr_B}t^{2M}\Psi^2(t\sqrt L)(m\chi_{U_j(B)})(x)\frac{dt}{t}.
\end{equation*}
By Lemma 3.6, we can easily conclude that for every $k=0,1,\ldots,M$, $supp\,(L^kb_j)\subseteq 2^{j+1}B$.
Since
\begin{equation*}
\Big\|\big[(2^{j+1}r_B)^2L\big]^kb_j\Big\|_{L^2(2^{j+1}B)}=\sup_{\|h\|_{L^2(2^{j+1}B)}\le1}\bigg|\int_{2^{j+1}B}\big[(2^{j+1}r_B)^2L\big]^kb_j(x)h(x)\,dx\bigg|.
\end{equation*}
Then it follows from H\"older's inequality and the estimate (3.1) that
\begin{equation*}
\begin{split}
&\bigg|\int_{2^{j+1}B}\big[(2^{j+1}r_B)^2L\big]^kb_j(x)h(x)\,dx\bigg|\\
=&c_\psi(2^{j+1}r_B)^{2k}\bigg|\int_0^{2^jr_B}\int_{2^{j+1}B}t^{2M}L^k\Psi(t\sqrt L)(m\chi_{U_j(B)})(y)\Psi(t\sqrt L)(h)(y)\frac{dydt}{t}\bigg|\\
\le&c_\psi(2^{j+1}r_B)^{2k}(2^jr_B)^{2M-2k}\bigg|\int_0^{2^jr_B}\int_{2^{j+1}B}(t^2L)^k\Psi(t\sqrt L)(m\chi_{U_j(B)})(y)\Psi(t\sqrt L)(h)(y)\frac{dydt}{t}\bigg|\\
\le&c_\psi(2^{j+1}r_B)^{2M}\bigg(\int_0^\infty\big\|(t^2L)^k\Psi(t\sqrt L)(m\chi_{U_j(B)})\big\|^2_{L^2(\mathbb R^n)}\frac{dt}{t}\bigg)^{1/2}\\
&\times\bigg(\int_0^\infty\big\|\Psi(t\sqrt L)(h\chi_{2^{j+1}B})\big\|^2_{L^2(\mathbb R^n)}\frac{dt}{t}\bigg)^{1/2}\\
\le&c_\psi(2^{j+1}r_B)^{2M}\big\|m\chi_{U_j(B)}\big\|_{L^2(\mathbb R^n)}\cdot\big\|h\chi_{2^{j+1}B}\big\|_{L^2(\mathbb R^n)}\\
\le&C\cdot2^{-j\varepsilon}(2^{j+1}r_B)^{2M}|2^{j+1}B|^{1/2}w(2^{j+1}B)^{-1/p}.
\end{split}
\end{equation*}
Hence
\begin{equation*}
\Big\|\big[(2^{j+1}r_B)^2L\big]^kb_j\Big\|_{L^2(2^{j+1}B)}\le C\cdot2^{-j\varepsilon}(2^{j+1}r_B)^{2M}|2^{j+1}B|^{1/2}w(2^{j+1}B)^{-1/p},
\end{equation*}
which implies our desired result. Next we consider the terms $\big\{m^{(2)}_j\big\}^\infty_{j=1}$. For every $j=1,2,\ldots$, we write
\begin{equation*}
\begin{split}
m^{(2)}_j(x)=&c_\psi\int_{2^{j-1}r_B}^{2^jr_B}(t^2L)^M\Psi^2(t\sqrt L)(m)(x)\frac{dt}{t}\\
&-c_\psi\int_{2^{j-1}r_B}^{2^jr_B}(t^2L)^M\Psi^2(t\sqrt L)(m\chi_{(2^{j}B)^c})(x)\frac{dt}{t}\\
=&m^{(21)}_j(x)-m^{(22)}_j(x).
\end{split}
\end{equation*}
To deal with the term $m^{(21)}_j$, we recall that $m=L^Mb$ for some $b\in\mathcal D(L^M)$. Then we have
\begin{equation*}
\begin{split}
m^{(21)}_j(x)&=c_\psi\int_{2^{j-1}r_B}^{2^jr_B}(t^2L)^M\Psi^2(t\sqrt L)(L^Mb)(x)\frac{dt}{t}\\
&=L^Mb^{(21)}_j(x),
\end{split}
\end{equation*}
where
\begin{equation*}
b^{(21)}_j(x)=c_\psi\int_{2^{j-1}r_B}^{2^jr_B}(t^2L)^M\Psi^2(t\sqrt L)(b)(x)\frac{dt}{t}.
\end{equation*}
Since $b(x)=b(x)\chi_{2^{j}B}(x)+\sum_{l=j}^\infty b(x)\chi_{U_l(B)}(x)$. Then we can further write
\begin{equation*}
b^{(21)}_j(x)=b^{(21)}_{1,j}(x)+\sum_{l=j}^\infty b^{(21)}_{lj}(x),
\end{equation*}
where
\begin{equation*}
b^{(21)}_{1,j}(x)=c_\psi\int_{2^{j-1}r_B}^{2^jr_B}(t^2L)^M\Psi^2(t\sqrt L)(b\chi_{2^{j}B})(x)\frac{dt}{t}
\end{equation*}
and
\begin{equation*}
b^{(21)}_{lj}(x)=c_\psi\int_{2^{j-1}r_B}^{2^jr_B}(t^2L)^M\Psi^2(t\sqrt L)(b\chi_{U_l(B)})(x)\frac{dt}{t}.
\end{equation*}
By using Lemma 3.6 again, we have $supp\,(L^kb^{(21)}_{1,j})\subseteq 2^jB$ and $supp\,(L^kb^{(21)}_{lj})\subseteq 2^{l+1}B$ for every $k=0,1,\ldots,M$. Moreover, it follows from Minkowski's integral inequality that
\begin{equation*}
\begin{split}
&\Big\|\big[(2^{j}r_B)^2L\big]^kb^{(21)}_{1,j}\Big\|_{L^2(2^{j}B)}\\
=&c_\psi(2^{j}r_B)^{2k}\Big\|\int_{2^{j-1}r_B}^{2^jr_B}t^{2M}L^{M+k}\Psi^2(t\sqrt L)(b\chi_{2^{j}B})\frac{dt}{t}\Big\|_{L^2(2^{j}B)}\\
\le&c_\psi(2^{j}r_B)^{2k}\int_{2^{j-1}r_B}^{2^jr_B}\big\|(t^2L)^{M+k}\Psi^2(t\sqrt L)(b\chi_{2^{j}B})\big\|_{L^2(2^{j}B)}\frac{dt}{t^{2k+1}}\\
\le&C\big\|b\chi_{2^{j}B}\big\|_{L^2(2^{j}B)}\\
\end{split}
\end{equation*}
\begin{equation*}
\begin{split}
\le&C\sum_{l=0}^{j-1}\big\|b\chi_{U_l(B)}\big\|_{L^2(2^{j}B)}\\
\le&C\sum_{l=0}^{j-1}2^{-l\varepsilon}r_B^{2M}|2^lB|^{1/2}w(2^lB)^{-1/p}.
\end{split}
\end{equation*}
When $0\le l\le j-1$, then $2^lB\subseteq 2^jB$. By using Lemma 2.2, we can get
\begin{equation*}
\frac{w(2^lB)}{w(2^jB)}\ge C\cdot\frac{|2^lB|}{|2^jB|}.
\end{equation*}
Consequently
\begin{equation*}
\begin{split}
&\Big\|\big[(2^{j}r_B)^2L\big]^kb^{(21)}_{1,j}\Big\|_{L^2(2^{j}B)}\\
\le&C\cdot2^{-j[2M-n(1/p-1/2)]}\cdot(2^{j}r_B)^{2M}|2^{j}B|^{1/2}w(2^{j}B)^{-1/p}\sum_{l=0}^{\infty}\frac{1}{2^{l\varepsilon}}\cdot\frac{1}{2^{l(n/p-n/2)}}\\
\le&C\cdot2^{-j[2M-n(1/p-1/2)]}\cdot(2^{j}r_B)^{2M}|2^{j}B|^{1/2}w(2^{j}B)^{-1/p}.
\end{split}
\end{equation*}
On the other hand
\begin{equation*}
\begin{split}
&\Big\|\big[(2^{l+1}r_B)^2L\big]^kb^{(21)}_{lj}\Big\|_{L^2(2^{l+1}B)}\\
= &c_\psi(2^{l+1}r_B)^{2k}\Big\|\int_{2^{j-1}r_B}^{2^jr_B}t^{2M}L^{M+k}\Psi^2(t\sqrt L)(b\chi_{U_l(B)})\frac{dt}{t}\Big\|_{L^2(2^{l+1}B)}\\
\le&c_\psi(2^{l+1}r_B)^{2k}\int_{2^{j-1}r_B}^{2^jr_B}\big\|(t^2L)^{M+k}\Psi^2(t\sqrt L)(b\chi_{U_l(B)})\big\|_{L^2(2^{l+1}B)}\frac{dt}{t^{2k+1}}\\
\le&C(2^{l+1}r_B)^{2k}\big\|b\chi_{U_l(B)}\big\|_{L^2(2^{l+1}B)}\cdot\frac{1}{(2^jr_B)^{2k}}\\
\le&C\cdot2^{-l\varepsilon}(2^{l+1}r_B)^{2M}|2^{l+1}B|^{1/2}w(2^{l+1}B)^{-1/p}.
\end{split}
\end{equation*}
Observe that $2M>n(1/p-1/2)$. Thus, from the above discussions, we have proved that each $m^{(21)}_j$ is a multiple of $w$-$(p,2,M)$-atom with a sequence of coefficients in $l^p$. Finally, we estimate the terms $\big\{m^{(22)}_j\big\}^\infty_{j=1}$. For every $j=1,2,\ldots$, we decompose $m^{(22)}_j$ as follows
\begin{equation*}
\begin{split}
m^{(22)}_j(x)&=c_\psi\sum_{l=j}^\infty\int_{2^{j-1}r_B}^{2^jr_B}(t^2L)^M\Psi^2(t\sqrt L)(m\chi_{U_l(B)})(x)\frac{dt}{t}\\
&=\sum_{l=j}^\infty L^M b^{(22)}_{lj}(x),
\end{split}
\end{equation*}
where
\begin{equation*}
b^{(22)}_{lj}(x)=c_\psi\int_{2^{j-1}r_B}^{2^jr_B}t^{2M}\Psi^2(t\sqrt L)(m\chi_{U_l(B)})(x)\frac{dt}{t}.
\end{equation*}
It follows immediately from Lemma 3.6 that $supp\,(L^k b^{(22)}_{lj})\subseteq 2^{l+1}B$ for every $k=1,2,\ldots,M$ and $l\ge j$. Moreover
\begin{equation*}
\begin{split}
&\Big\|\big[(2^{l+1}r_B)^2L\big]^kb^{(22)}_{lj}\Big\|_{L^2(2^{l+1}B)}\\
= &c_\psi(2^{l+1}r_B)^{2k}\Big\|\int_{2^{j-1}r_B}^{2^jr_B}t^{2M}L^k\Psi^2(t\sqrt L)(m\chi_{U_l(B)})\frac{dt}{t}\Big\|_{L^2(2^{l+1}B)}\\
\le &c_\psi(2^{l+1}r_B)^{2k}(2^lr_B)^{2M-2k}\int_{2^{j-1}r_B}^{2^jr_B}\big\|(t^2L)^k\Psi^2(t\sqrt L)(m\chi_{U_l(B)})\big\|_{L^2(2^{l+1}B)}\frac{dt}{t}\\
\le &c_\psi(2^{l+1}r_B)^{2M}\big\|m\chi_{U_l(B)}\big\|_{L^2(2^{l+1}B)}\\
\le &C\cdot2^{-l\varepsilon}(2^{l+1}r_B)^{2M}|2^{l+1}B|^{1/2}w(2^{l+1}B)^{-1/p}.
\end{split}
\end{equation*}
Therefore, we have showed that each $m^{(22)}_j$ is also a multiple of $w$-$(p,2,M)$-atom with a sequence of coefficients in $\mathit l^p$. This completes the proof of Theorem 3.8.
\end{proof}

\section{Proofs of Theorems 1.1 and 1.2}
\begin{proof}[Proof of Theorem 1.1]
For any $\gamma\in\mathbb R$, since the operator $L^{i\gamma}$ is linear and bounded on $L^2(\mathbb R^n)$, then by Theorems 3.3 and 3.5, it is enough to show that for any $w$-$(p,2,M)$-atom $a$, $M\in\mathbb N$, there exists a constant $C>0$ independent of $a$ such that
$\|L^{i\gamma}(a)\|_{L^p(w)}\le C$. Let $a$ be a $w$-$(p,2,M)$-atom with $supp\,a\subseteq B=B(x_0,r_B)$, $\|a\|_{L^2(B)}\le |B|^{1/2}w(B)^{-1/p}$. We write
\begin{equation*}
\begin{split}
\big\|L^{i\gamma}(a)\big\|^p_{L^p(w)}&=\int_{2B}\big|L^{i\gamma}(a)(x)\big|^pw(x)\,dx+\int_{(2B)^c}\big|L^{i\gamma}(a)(x)\big|^pw(x)\,dx\\
&=I_1+I_2.
\end{split}
\end{equation*}
Set $s=2/p>1$. Note that $w\in RH_{s'}$, then it follows from H\"older's inequality, the $L^2$ boundedness of $L^{i\gamma}$ and Lemma 2.1 that
\begin{align}
I_1&\le\Big(\int_{2B}\big|L^{i\gamma}(a)(x)\big|^2\,dx\Big)^{p/2}\Big(\int_{2B}w(x)^{s'}\,dx\Big)^{1/{s'}}\notag\\
&\le C\|a\|^p_{L^2(B)}\cdot\frac{w(2B)}{|2B|^{1/s}}\notag\\
&\le C.
\end{align}
On the other hand, by using H\"older's inequality and the fact that $w\in RH_{s'}$, we can get
\begin{align}
I_2&=\sum_{k=1}^\infty\int_{2^{k+1}B\backslash 2^k B}\big|L^{i\gamma}(a)(x)\big|^pw(x)\,dx\notag\\
&\le C\sum_{k=1}^\infty\Big(\int_{2^{k+1}B\backslash 2^k B}\big|L^{i\gamma}(a)(x)\big|^2\,dx\Big)^{p/2}\cdot\frac{w(2^{k+1}B)}{|2^{k+1}B|^{1/s}}.
\end{align}
For any $x\in 2^{k+1}B\backslash2^k B$, $k=1,2,\ldots,$ by the expression (1.2), we can write
\begin{equation*}
\begin{split}
\big|L^{i\gamma}(a)(x)\big|&\le C\int_0^\infty e^{-tL}(a)(x)\frac{dt}{t}\\
&\le C\int_0^{r^2_B} e^{-tL}(a)(x)\frac{dt}{t}+C\int_{r^2_B}^\infty e^{-tL}(a)(x)\frac{dt}{t}\\
&=\mbox{\upshape I+II}.
\end{split}
\end{equation*}
For the term I, we observe that when $x\in2^{k+1}B\backslash 2^k B$, $y\in B$, then $|x-y|\ge 2^{k-1}r_B$. Hence, by using H\"older's inequality and the estimate (1.1), we deduce
\begin{align}
\big|e^{-tL}a(x)\big|&\le C\cdot\frac{t^{1/2}}{(2^{k-1}r_B)^{n+1}}\int_B\big|a(y)\big|\,dy\notag\\
&\le C\cdot\frac{t^{1/2}}{(2^{k}r_B)^{n+1}}\|a\|_{L^2(\mathbb R^n)}|B|^{1/2}\notag\\
&\le C\cdot w(B)^{-1/p}\frac{t^{1/2}}{2^{k(n+1)}\cdot r_B}.
\end{align}
So we have
\begin{equation*}
\begin{split}
\mbox{\upshape I}&\le C\cdot\frac{1}{2^{k(n+1)}w(B)^{1/p}}\cdot\frac1{r_B}\int_0^{r^2_B}\frac{dt}{\sqrt t}\\
&\le C\cdot\frac{1}{2^{k(n+1)}w(B)^{1/p}}.
\end{split}
\end{equation*}
We now turn to estimate the other term II. In this case, since there exists a function $b\in\mathcal D(L^M)$ such that $a=L^M b$ and $\|b\|_{L^2(B)}\le r^{2M}_B|B|^{1/2}w(B)^{-1/p}$, then it follows from H\"older's inequality and Lemma 3.1 that
\begin{align}
\big|e^{-tL}a(x)\big|&=\big|(tL)^Me^{-tL}b(x)\big|\cdot\frac{1}{t^M}\notag\\
&\le C\cdot\frac{1}{(2^{k-1}r_B)^{n+1}}\int_B|b(y)|\,dy\cdot\frac{1}{t^{M-1/2}}\notag\\
&\le C\cdot\frac{1}{(2^{k}r_B)^{n+1}}\|b\|_{L^2(\mathbb R^n)}|B|^{1/2}\cdot\frac{1}{t^{M-1/2}}\notag\\
&\le C\cdot\frac{r^{2M-1}_B}{2^{k(n+1)}w(B)^{1/p}}\cdot\frac{1}{t^{M-1/2}}.
\end{align}
Consequently
\begin{equation*}
\begin{split}
\mbox{\upshape II}&\le C\cdot\frac{1}{2^{k(n+1)}w(B)^{1/p}}\cdot r^{2M-1}_B\int_{r^2_B}^\infty\frac{dt}{t^{M+1/2}}\\
&\le C\cdot\frac{1}{2^{k(n+1)}w(B)^{1/p}},
\end{split}
\end{equation*}
where in the last inequality we have used the fact that $M\ge1$. Therefore, by combining the above estimates for I and II, we obtain
\begin{equation}
\big|L^{i\gamma}(a)(x)\big|\le C\cdot\frac{1}{2^{k(n+1)}w(B)^{1/p}},\quad \mbox{when }x\in 2^{k+1}B\backslash2^k B.
\end{equation}
Substituting the above inequality (4.5) into (4,2) and using Lemma 2.1, then we have
\begin{align}
I_2&\le C\sum_{k=1}^\infty\frac{1}{2^{kp(n+1)}w(B)}\cdot w(2^{k+1}B)\notag\\
&\le C\sum_{k=1}^\infty\frac{1}{2^{k[p(n+1)-n]}}\notag\\
&\le C,
\end{align}
where the last series is convergent since $p>n/{(n+1)}$. Summarizing the estimates (4.1) and (4.6) derived above, we complete the proof of Theorem 1.1.
\end{proof}

\begin{proof}[Proof of Theorem 1.2]
Since the operator $L^{i\gamma}$ is linear and bounded on $L^2(\mathbb R^n)$, then by using Theorems 3.3, 3.5 and 3.8, it suffices to verify that for every $w$-$(p,2,2M)$-atom $a$, the function $m=L^{i\gamma}(a)$ is a multiple of $w$-$(p,2,M,\varepsilon)$-molecule for some $\varepsilon>0$, and the multiple constant is independent of $a$. Let $a$ be a $w$-$(p,2,2M)$-atom with $supp\,a\subseteq B=B(x_0,r_B)$. Then by definition, there exists a function $b\in\mathcal D(L^{2M})$ such that $a=L^{2M}(b)$ and $\|(r^2_BL)^kb\|_{L^2(B)}\le r^{4M}_B|B|^{1/2}w(B)^{-1/p}$, $k=0,1,\ldots,2M$. We set $\tilde b=L^{i\gamma}(L^Mb)$, then we have $m=L^M(\tilde b)$. Obviously, $m(x)\in L^2(\mathbb R^n)$. Moreover, for $k=0,1,\ldots,M$, we can deduce
\begin{equation*}
\begin{split}
\big\|(r_B^2L)^k\tilde b\big\|_{L^2(2B)}&=\frac{1}{r^{2M}_B}\big\|L^{i\gamma}[(r^2_BL)^{M+k}b]\big\|_{L^2(2B)}\\
&\le C\cdot\frac{1}{r^{2M}_B}\big\|(r^2_BL)^{M+k}b\big\|_{L^2(B)}\\
&\le C\cdot r^{2M}_B|B|^{1/2}w(B)^{-1/p}.
\end{split}
\end{equation*}

It remains to estimate $\|(r_B^2L)^k\tilde b\|_{L^2(2^{j+1}B\backslash 2^j B)}$ for $k=0,1,\ldots,M$, $j=1,2,\ldots$. We write
\begin{equation*}
\begin{split}
\big|(r^2_BL)^k\tilde b(x)\big|&=\big|L^{i\gamma}[(r^2_BL)^kL^Mb](x)\big|\\
&\le C\int_0^{r^2_B}e^{-tL}[r^{2k}_BL^{M+k}b](x)\frac{dt}{t}+C\int_{r^2_B}^\infty e^{-tL}[r^{2k}_BL^{M+k}b](x)\frac{dt}{t}\\
&=\mbox{\upshape I$^\prime$+II$^\prime$}.
\end{split}
\end{equation*}
As mentioned in the proof of Theorem 1.1, we know that when $x\in2^{j+1}B\backslash 2^j B$, $y\in B$, then $|x-y|\ge2^{j-1}r_B$, $j=1,2,\ldots$. It follows from H\"older's inequality and the estimate (1.1) that
\begin{align}
\mbox{\upshape I$^\prime$}&\le C\int_0^{r^2_B}\frac{t^{1/2}}{(2^{j-1}r_B)^{n+1}}\big\|r^{2k}_BL^{M+k}b\big\|_{L^2(\mathbb R^n)}|B|^{1/2}\frac{dt}{t}\notag\\
&\le C\cdot\frac{1}{(2^jr_B)^{n+1}}\Big(\frac{1}{r^{2M}_B}\cdot r^{4M}_B|B|^{1/2}w(B)^{-1/p}\Big)|B|^{1/2}\int_0^{r^2_B}\frac{dt}{\sqrt t}\notag\\
&\le C\cdot\frac{1}{2^{j(n+1)}}\cdot r^{2M}_Bw(B)^{-1/p}.
\end{align}
Since $B\subseteq 2^jB$, $j=1,2,\ldots$, then by using Lemma 2.2, we can get
\begin{equation}
\frac{w(B)}{w(2^jB)}\ge C\cdot\frac{|B|}{|2^jB|}.
\end{equation}
Hence
\begin{equation*}
\mbox{\upshape I$^\prime$}\le C\cdot\frac{1}{2^{j[(n+1)-n/p]}}\cdot r^{2M}_Bw(2^jB)^{-1/p},\quad \mbox{when }x\in 2^{j+1}B\backslash2^j B.
\end{equation*}
Applying H\"older's inequality and Lemma 3.1, we obtain
\begin{align}
\mbox{\upshape II$^\prime$}&\le C\cdot r^{2k}_B\int_{r^2_B}^\infty(tL)^{M+k}e^{-tL}(b)(x)\frac{dt}{t^{M+k+1}}\notag\\
&\le C\cdot r^{2k}_B\int_{r^2_B}^\infty\frac{1}{(2^{j-1}r_B)^{n+1}}\|b\|_{L^2(\mathbb R^n)}|B|^{1/2}\frac{dt}{t^{M+k+1/2}}\notag\\
&\le C\cdot\frac{1}{(2^jr_B)^{n+1}}\Big(r^{4M+2k}_B|B|w(B)^{-1/p}\Big)\int_{r^2_B}^\infty\frac{dt}{t^{M+k+1/2}}\notag\\
&\le C\cdot\frac{1}{2^{j(n+1)}}\cdot r^{2M}_Bw(B)^{-1/p}.
\end{align}
It follows immediately from the above inequality (4.8) that
\begin{equation*}
\mbox{\upshape II$^\prime$}\le C\cdot\frac{1}{2^{j[(n+1)-n/p]}}\cdot r^{2M}_Bw(2^jB)^{-1/p},\quad \mbox{when }x\in 2^{j+1}B\backslash2^j B.
\end{equation*}
Combining the above estimates for I$^\prime$ and II$^\prime$, we thus obtain
\begin{equation*}
\big\|(r^2_BL)^k\tilde b\big\|_{L^2(2^{j+1}B\backslash 2^j B)}\le C\cdot\frac{1}{2^{j[(n+1)-n/p]}}\cdot r^{2M}_B|2^jB|^{1/2}w(2^jB)^{-1/p}.
\end{equation*}
Observe that $p>n/{(n+1)}$. If we set $\varepsilon=(n+1)-n/p$, then we have $\varepsilon>0$. Therefore, we have proved that the function $m=L^{i\gamma}(a)$ is a multiple of $w$-$(p,2,M,\varepsilon)$-molecule. This completes the proof of Theorem 1.2.
\end{proof}

\section{Proofs of Theorems 1.3 and 1.4}
\begin{proof}[Proof of Theorem 1.3]
As in the proof of Theorem 1.1, it suffices to prove that for every $w$-$(p,2,M)$-atom $a$, $M>{(3n)}/4$, there exists a constant $C>0$ independent of $a$ such that $\|L^{-\alpha/2}(a)\|_{L^q(w^{q/p})}\le C$. We write
\begin{equation*}
\begin{split}
\big\|L^{-\alpha/2}(a)\big\|^q_{L^q(w^{q/p})}&=\int_{2B}\big|L^{-\alpha/2}(a)(x)\big|^qw(x)^{q/p}\,dx+\int_{(2B)^c}\big|L^{-\alpha/2}(a)(x)\big|^qw(x)^{q/p}\,dx\\
&=J_1+J_2.
\end{split}
\end{equation*}
First note that $0<\alpha<n/2$, $1/q=1/p-\alpha/n$, then we are able to choose a number $\mu>q$ such that $1/\mu=1/2-\alpha/n$. Set $s=2/p$, then by a simple calculation, we can easily see that $(q/p)\cdot(\mu/q)'=s'$ and $1-q/\mu=q/{(ps')}$. Applying H\"older's inequality, the $L^2$-$L^\mu$ boundedness of $L^{-\alpha/2}$, Lemma 2.1 and $w\in RH_{s'}$, we can get
\begin{align}
J_1&\le\Big(\int_{2B}\big|L^{-\alpha/2}(a)(x)\big|^{q\cdot\frac \mu q}\,dx\Big)^{q/\mu}\Big(\int_{2B}w(x)^{\frac qp\cdot(\frac \mu q)'}\,dx\Big)^{1-q/\mu}\notag\\
&=\Big(\int_{2B}\big|L^{-\alpha/2}(a)(x)\big|^{\mu}\,dx\Big)^{q/\mu}\Big(\int_{2B}w(x)^{s'}\,dx\Big)^{q/{(ps')}}\notag\\
&\le C\big\|a\big\|_{L^2(\mathbb R^n)}^q\left(\frac{w(2B)}{|2B|^{1/s}}\right)^{q/p}\notag\\
&\le C.
\end{align}

We now turn to deal with $J_2$. Using the condition $w\in RH_{s'}$ and H\"older's inequality, we obtain
\begin{align}
J_2&=\sum_{k=1}^\infty\int_{2^{k+1}B\backslash 2^k B}\big|L^{-\alpha/2}(a)(x)\big|^qw(x)^{q/p}\,dx\notag\\
&\le C\sum_{k=1}^\infty\bigg(\int_{2^{k+1}B\backslash 2^k B}\big|L^{-\alpha/2}(a)(x)\big|^\mu\,dx\bigg)^{q/\mu}\cdot\bigg(\frac{w(2^{k+1}B)}{|2^{k+1}B|^{1/s}}\bigg)^{q/p}.
\end{align}
For any $x\in 2^{k+1}B\backslash2^k B$, $k=1,2,\ldots,$ by the expression (1.3), we can write
\begin{equation*}
\begin{split}
\big|L^{-\alpha/2}(a)(x)\big|&\le C\int_0^\infty e^{-tL}(a)(x)\frac{dt}{t^{1-\alpha/2}}\\
&\le C\int_0^{r^2_B} e^{-tL}(a)(x)\frac{dt}{t^{1-\alpha/2}}+C\int_{r^2_B}^\infty e^{-tL}(a)(x)\frac{dt}{t^{1-\alpha/2}}\\
&=\mbox{\upshape III+IV}.
\end{split}
\end{equation*}
For the term III, it follows immediately from (4.3) that
\begin{equation*}
\begin{split}
\mbox{\upshape III}&\le C\cdot\frac{1}{2^{k(n+1)}w(B)^{1/p}}\cdot \frac1{r_B}\int_0^{r^2_B}\frac{dt}{t^{1/2-\alpha/2}}\\
&\le C\cdot\frac{r^\alpha_B}{2^{k(n+1)}w(B)^{1/p}}.
\end{split}
\end{equation*}
For the other term IV, by the previous estimate (4.4), we thus have
\begin{equation*}
\begin{split}
\mbox{\upshape IV}&\le C\cdot\frac{1}{2^{k(n+1)}w(B)^{1/p}}\cdot r^{2M-1}_B\int_{r^2_B}^\infty\frac{dt}{t^{M+1/2-\alpha/2}}\\
&\le C\cdot\frac{r^\alpha_B}{2^{k(n+1)}w(B)^{1/p}},
\end{split}
\end{equation*}
where the last inequality holds since $M>{(3n)}/4>1/2+\alpha/2$. Combining the above estimates for III and IV, we obtain
\begin{equation}
\big|L^{-\alpha/2}(a)(x)\big|\le C\cdot\frac{r^\alpha_B}{2^{k(n+1)}w(B)^{1/p}},\quad \mbox{when }x\in 2^{k+1}B\backslash2^k B.
\end{equation}
Substituting the above inequality (5.3) into (5,2) and using Lemma 2.1, we can get
\begin{align}
J_2&\le C\sum_{k=1}^\infty\big|2^{k+1}B\big|^{q/\mu}\cdot\bigg(\frac{r^\alpha_B}{2^{k(n+1)}w(B)^{1/p}}\bigg)^q\cdot\bigg(\frac{w(2^{k+1}B)}{|2^{k+1}B|^{1/s}}\bigg)^{q/p}\notag\\
&\le C\sum_{k=1}^\infty\frac{1}{2^{k[q(n+1)-n]}}\notag\\
&\le C,
\end{align}
where in the last inequality we have used the fact that $q>p>n/{(n+1)}$.
Therefore, by combining the above inequality (5.4) with (5.1), we conclude the proof of Theorem 1.3.
\end{proof}

\begin{proof}[Proof of Theorem 1.4]
As in the proof of Theorem 1.2, it is enough to show that for every $w$-$(p,2,2M)$-atom $a$, the function $m=L^{-\alpha/2}(a)$ is a multiple of $w$-$(p,2,M,\varepsilon)$-molecule for some $\varepsilon>0$, and the multiple constant is independent of $a$. Let $a$ be a $w$-$(p,2,2M)$-atom with $supp\,a\subseteq B=B(x_0,r_B)$, and $a=L^{2M}(b)$, where $M>{(3n)/4}>\max\{\frac n2(\frac 1p-\frac12),\frac12+\frac{\alpha}2\}$, $b\in\mathcal D(L^{2M})$. Set $\tilde b=L^{-\alpha/2}(L^Mb)$, then we have $m=L^M(\tilde b)$. It is easy to check that $m(x)\in L^2(\mathbb R^n)$. As before, since $0<\alpha<n/2$, then we may choose a number $\mu>2$ such that $1/\mu=1/2-\alpha/n$. For $k=0,1,\ldots,M$, by using H\"older's inequality, the $L^2$-$L^\mu$ boundedness of $L^{-\alpha/2}$, we obtain
\begin{align}
\big\|(r_B^2L)^k\tilde b\big\|_{L^2(2B)}&\le\frac{1}{r^{2M}_B}\big\|L^{-\alpha/2}[(r^2_BL)^{M+k}b]\big\|_{L^\mu(2B)}|2B|^{1/2-1/\mu}\notag\\
&\le C\cdot\frac{1}{r^{2M}_B}\big\|(r^2_BL)^{M+k}b\big\|_{L^2(B)}|B|^{1/2-1/\mu}\notag\\
&\le C\cdot r^{2M}_B|B|^{1/2+\alpha/n}w(B)^{-1/p}.
\end{align}
Note that $1/q=1/p-\alpha/n$, then a straightforward computation yields that $q/p<(2/p)'$ whenever $0<\alpha<n/2$. By our assumption $w\in RH_{(2/p)'}$, then we have $w\in RH_{q/p}$. Consequently 
\begin{equation*}
w^{q/p}(B)^{p/q}\le C\cdot\frac{w(B)}{|B|^{1-p/q}},
\end{equation*}
which implies
\begin{equation}
w(B)^{-1/p}\le C\cdot|B|^{1/q-1/p}w^{q/p}(B)^{-1/q}.
\end{equation}
Substituting the above inequality (5.6) into (5.5), we can get
\begin{equation}
\big\|(r_B^2L)^k\tilde b\big\|_{L^2(2B)}\le C\cdot r^{2M}_B|B|^{1/2}w^{q/p}(B)^{-1/q}.
\end{equation}

It remains to estimate $\|(r_B^2L)^k\tilde b\|_{L^2(2^{j+1}B\backslash 2^j B)}$ for $k=0,1,\ldots,M$, $j=1,2,\ldots$. We write
\begin{equation*}
\begin{split}
&\big|(r^2_BL)^k\tilde b(x)\big|\\
=&\big|L^{-\alpha/2}[(r^2_BL)^kL^Mb](x)\big|\\
\le&C\int_0^{r^2_B}e^{-tL}[r^{2k}_BL^{M+k}b](x)\frac{dt}{t^{1-\alpha/2}}+C\int_{r^2_B}^\infty e^{-tL}[r^{2k}_BL^{M+k}b](x)\frac{dt}{t^{1-\alpha/2}}\\
=&\mbox{\upshape III$^\prime$+IV$^\prime$}.
\end{split}
\end{equation*}
For the term III$^\prime$, by using the same arguments as in the proof of (4.7), we have
\begin{equation*}
\begin{split}
\mbox{\upshape III$^\prime$}&\le C\cdot\frac{1}{2^{j(n+1)}}\cdot r^{2M}_Bw(B)^{-1/p}\frac{1}{r_B}\int_0^{r^2_B}\frac{dt}{t^{1/2-\alpha/2}}\\
&\le C\cdot\frac{1}{2^{j(n+1)}}\cdot r^{2M+\alpha}_Bw(B)^{-1/p}.
\end{split}
\end{equation*}
For the term IV$^\prime$, we follow the same arguments as that of (4.9) and then obtain
\begin{equation*}
\begin{split}
\mbox{\upshape IV$^\prime$}&\le C\cdot\frac{1}{2^{j(n+1)}}\cdot w(B)^{-1/p}r^{4M+2k-1}_B\int_{r^2_B}^\infty\frac{dt}{t^{M+k+1/2-\alpha/2}}\\
&\le C\cdot\frac{1}{2^{j(n+1)}}\cdot r^{2M+\alpha}_Bw(B)^{-1/p}.
\end{split}
\end{equation*}
Combining the above estimates for III$^\prime$ and IV$^\prime$, we can get
\begin{equation*}
\big|(r^2_BL)^k\tilde b(x)\big|\le C\cdot\frac{1}{2^{j(n+1)}}\cdot r^{2M+\alpha}_Bw(B)^{-1/p},\quad \mbox{when }x\in 2^{j+1}B\backslash2^j B.
\end{equation*}
Since $w\in A_1$, then it follows from the previous inequality (4.8) that
\begin{equation*}
\big|(r^2_BL)^k\tilde b(x)\big|\le C\cdot\frac{1}{2^{j[(n+1)-n/p]}}\cdot r^{2M+\alpha}_Bw(2^jB)^{-1/p},\quad \mbox{when }x\in 2^{j+1}B\backslash2^j B.
\end{equation*}
Similar to the proof of (5.6), we can also show that
\begin{equation*}
w(2^jB)^{-1/p}\le C\cdot|2^jB|^{1/q-1/p}w^{q/p}(2^jB)^{-1/q}.
\end{equation*}
Hence
\begin{equation*}
\big|(r^2_BL)^k\tilde b(x)\big|\le C\cdot\frac{1}{2^{j[(n+1)-n/q]}}\cdot r^{2M}_Bw^{q/p}(2^jB)^{-1/q},\quad \mbox{when }x\in 2^{j+1}B\backslash2^j B.
\end{equation*}
Therefore
\begin{equation}
\big\|(r^2_BL)^k\tilde b\big\|_{L^2(2^{j+1}B\backslash 2^j B)}\le C\cdot\frac{1}{2^{j[(n+1)-n/q]}}\cdot r^{2M}_B|2^jB|^{1/2}w^{q/p}(2^jB)^{-1/q}.
\end{equation}
Observe that $1\ge q>p>n/{(n+1)}$. If we set $\varepsilon=(n+1)-n/q$, then $\varepsilon>0$. Summarizing the estimates (5.7) and (5.8) derived above, we finally conclude the proof of Theorem 1.4.
\end{proof}

\end{document}